\newtheorem{thm}{Theorem}
\numberwithin{equation}{section}
\newtheorem{lem}{Lemma}
\newtheorem{cor}{Corollary}
\newtheorem*{cor*}{Corollary}
\theoremstyle{definition}
\theoremstyle{remark}
\newtheorem{rem}{Remark}
\theoremstyle{remark}
\newcommand{\cQ}{{\cal Q}}
\newcommand{\cU}{{\cal U}}
\newcommand{\cS}{{\cal S}}
\newcommand{\cM}{\mathcal{M}}
\newcommand{\vv}[1]{\mathbf{#1}}
\newcommand{\Z}{\mathbb{Z}}
\newcommand{\Q}{\mathbb{Q}}
\newcommand{\R}{\mathbb{R}}
\newcommand{\N}{\mathbb{N}}
\newcommand{\hs}{\mathcal{ H}^{s}}
\newcommand{\cH}{{\cal H}}
\begin{document}

\title{\bf Diophantine approximation on manifolds and lower bounds for Hausdorff dimension}

\author{Victor Beresnevich\footnote{Supported by EPSRC Programme Grant: EP/J018260/1}\and Lawrence Lee\footnote{Supported by LMS Undergraduate Research Bursary URB 16-39}\and Robert C. Vaughan \and \\ Sanju Velani\footnote{Supported by EPSRC Programme Grant: EP/J018260/1} \\ }

\date{{\small\it Dedicated to Klaus Roth}}

\maketitle

\begin{abstract}
Given $n\in\N$ and $\tau>\frac1n$, let $\cS_n(\tau)$ denote the classical set of $\tau$-approximable points in $\R^n$, which consists of $\vv x\in \R^n$ that lie within distance $q^{-\tau-1}$ from the lattice $\frac1q\Z^n$ for infinitely many $q\in\N$.  In pioneering work, Kleinbock $\&$ Margulis showed that for any non-degenerate submanifold $\cM$ of $\R^n$ and any $\tau>\frac1n$ almost all points on $\cM$ are not $\tau$-approximable. Numerous subsequent papers have been geared towards strengthening this result  through  investigating   the Hausdorff measure and dimension of the associated null set  $\cM\cap\cS_n(\tau)$. In this paper we suggest a new approach based on the Mass Transference Principle of \cite{Mass}, which enables us to find a sharp lower bound for  $\dim \cM\cap\cS_n(\tau)$ for any $C^2$ submanifold $\cM$ of $\R^n$
and any $\tau$ satisfying $\frac1n\le\tau<\frac1m$. Here $m$ is the codimension of  $\cM$. We also show that the condition on $\tau$ is best possible and extend the result to general approximating functions.
\end{abstract}

\section{Introduction}

Throughout $\psi:\N\to\R^+$ will denote a monotonic function that will be referred to as an \emph{approximating function}, $n\in\N$ and $\cS_n(\psi)$ will be the set of \emph{$\psi$-approximable} points $\vv y\in\R^n$, that is points $\vv y=(y_1,\dots,y_n)\in\R^n$ such that
\begin{equation}\label{vb1}
\max_{1\le i\le n}|qy_i-p|<\psi(q)
\end{equation}
holds for infinitely many integer vectors $(\vv p,q)=(p_1,\dots,p_n,q)\in\Z^n\times\N$.
Thus the point $\vv y$ lies within distance $\psi(q)/q$ (in the supremum norm) from the lattice $\frac1q\Z^n$ for infinitely many $q\in\N$.

If $\psi(q)=q^{-\tau}$ for some $\tau>0$, then we write $\cS_n(\tau)$ for $\cS_n(\psi)$ and say that $\vv y\in\cS_n(\tau)$ is~\emph{$\tau$-approximable}. The classical theorem of  Dirichlet states that for any
$\vv y\in\R^n$, there exist infinitely many $(\vv p,q)=(p_1,\dots,p_n,q)\in\Z^n\times\N$ such that
\begin{equation*}\label{svb1}
\max_{1\le i\le n}|qy_i-p_i|<  q^{-\frac1n}\quad\quad\ (1\le i\le n)  \, .
\end{equation*}
Hence,
\begin{equation} \label{dualexp+}
\cS_n(\tau) = \R^n \quad\text{for any }\tau\le \tfrac1n\,.
\end{equation}
In turn, a rather simple consequence of the Borel-Cantelli lemma from probability theory is that $\cS_n(\psi)$ is null (that is of $n$-dimensional Lebesgue measure zero) whenever $\sum_{q=1}^\infty\psi^n(q)<\infty$. Thus, for any  $\tau > 1/n$ almost every point $\vv y\in\R^n$ is not $\tau$-approximable.  However, Khintchine's theorem \cite{Khintchine-1924} tells us that the set $\cS_n(\psi)$ is full (that is its complement is of Lebesgue measure zero) whenever $\sum_{q=1}^\infty\psi^n(q)=\infty$. In order to quantify the size of $\cS_n(\psi)$ when it is null, Jarn\'ik \cite{Jarpa} and Besicovitch \cite{Besic34} pioneered the use of Hausdorff measures and dimension. Throughout, $\dim X$ will denote the Hausdorff dimension of a  subset $X$ of  $\R^n$ and $\cH^s(X)$ the $s$-dimensional Hausdorff measure (see \S\ref{mi} for the definition and further details).   The modern version of the classical Jarn\'ik-Besicovitch theorem (see \cite{BBDVRoth,BDV06}) states that for any approximating function $\psi$
\begin{equation}\label{JBdani}
\dim \cS_n(\psi) = \min\left\{n,\frac{n+1}{\tau_{\psi}^*+1}\right\}   \qquad\text{where }  \quad \tau_{\psi}^*:=\liminf_{q\to\infty}\frac{-\log\psi(q)}{\log q}\,.
\end{equation}
In other words, the `modern theorem'  relates the Hausdorff dimension of  $\cS_n(\psi)$ to  the lower order  at infinity of $1/\psi$ and up to a certain degree allows us to  discriminate between $\psi$-approximable sets of Lebesgue measure zero. The classical Jarn\'ik-Besicovitch theorem corresponds to  \eqref{JBdani} with $\cS_n(\psi)$  replaced by  $\cS_n(\tau)$  and so by definition  $ \tau_{\psi}^* = \tau$.
A more delicate measurement of the `size' of  $\cS_n(\psi)$ is obtained by expressing the size in terms of Hausdorff measures $\cH^s$.   With respect to such measures, the modern version of Jarn\'ik theorem  (see \cite{BBDVRoth} or \cite{BDV06})  states that  for any $s \in (0,n)$ and any  approximating function  $\psi$
\begin{equation}\label{jarnikdani}
\cH^{s}\big(\cS_n(\psi)\big) \, = \,
\left\{\begin{array}{cl}
0  & {\rm if} \;\;\;
\textstyle{\sum_{q=1}^\infty} \; q^{n-s}\psi^{s}(q) \; <\infty \; ,\\[3ex]
\infty & {\rm if} \;\;\;
\textstyle{ \sum_{q=1}^\infty } \;  q^{n-s}\psi^{s}(q) \;
 =\infty \; .
\end{array}\right.
\end{equation}
This statement is a natural generalisation of Khintchine's theorem (a Lebesgue measure statement) to Hausdorff measures and it is easily verified that it implies \eqref{JBdani}.
It is worth pointing out that there is an even more general version of \eqref{jarnikdani} that makes use of more general Hausdorff measures, see \cite{BBDVRoth,  BDV06, Mass,det}. Within this paper we restrict ourselves to the case of $s$-dimensional Hausdorff measures.  For  background and further details  regarding the classical theory of metric Diophantine approximation with an emphasis on the statements described  above  see   \cite{durham}

When the coordinates of the approximated point $\vv y \in \R^n$ are confined by functional relations, we fall into the theory  of Diophantine approximation on manifolds.    In short, given a manifold $\cM$ of $\R^n$ the aim is to establish analogues of the  fundamental theorems of Khintchine, Jarn\'ik-Besicovitch and Jarn\'\i k, and thereby provide a complete measure theoretic description of the sets $\cS_n(\psi) \cap\cM $. The fact that the points $\vv y \in \R^n$ of
interest are of dependent variables, which reflects the fact that
$\vv y \in {\cal M}$, introduces major difficulties in attempting to
describe the measure theoretic structure of $  \cS_n(\psi) \cap\cM
$.

A differentiable manifold $\cM$ of $\R^n$ is called \emph{extremal} if for any $\tau>\tfrac1n$ almost every $\vv y\in\cM$ (in the sense of the induced Lebesgue measure on $\cM$) is not $\tau$-approximable \cite{BD00}. Thus, $\cM=\R^n$ is extremal. In 1998 Kleinbock $\&$ Margulis \cite{KM98} proved Sprind\v zuk's conjecture; namely that any non-degenerate submanifold of $\R^n$ is extremal. Essentially, these are smooth
submanifolds of $\R^n$ which are sufficiently curved so as to deviate
from any hyperplane. Any real, connected analytic manifold not
contained in any hyperplane of $\R^n$ is non--degenerate.
The extremality result  of \cite{KM98} has subsequently been  extended to include various degenerate  manifolds and more generally subsets of $\R^n$ that support so called friendly measures, see  \cite{Kle03, KLW, PV} and references within. Indeed, over the last decade or so, the theory of Diophantine approximation on manifolds has developed at  some considerable pace with the catalyst being the pioneering work of Kleinbock \& Margulis .  For details of this and an overview of the current state of the more subtle  Khintchine and Jarn\'ik type results for  $\cS_n(\psi) \cap \cM $ see \cite[\S6]{durham} and references within.

The  main goal of this paper is to establish  Jarn\'ik-Besicovitch type  results for manifolds.  In other words, we are interested in the Hausdorff dimension of $\cS_n(\tau)\cap\cM$ for $\tau > \frac1n$. Note that in view of \eqref{dualexp+}, for $\tau  \le  \frac1n$ we always have that $\cS_n(\tau)\cap\cM = \cM$  and so there is nothing more to say.   Throughout, let $m$ denote  the codimension of the differentiable  submanifold $\cM$ of $\R^n$. Then, heuristic considerations quickly lead to the following formula:
\begin{equation}\label{JB}
\dim\cS_n(\tau)\cap\cM \, = \, \frac{n+1}{\tau+1}-m\,  .
\end{equation}
In the case $\cM=\R^n$ (so that $m=0$)  the above formula coincides with  \eqref{JBdani} and holds for all $\tau\ge\frac1n$. Indeed, it corresponds to the classical  Jarn\'ik-Besicovitch theorem.  For non-degenerate curves in $\R^2$, as a result of various works \cite{BDV, BZ, Huang, VV06}, we know  that \eqref{JB} holds for all $\tau\in[\tfrac12,1)$.  For  completeness it is worth pointing out that non-degenerate planar curves are characterised by being $C^2$ and having non-zero curvature.

Beyond planar curves, for analytic non-degenerate manifolds $\cM$  in $\R^n$ the lower bound associated with \eqref{JB} is established in  \cite{B12} and holds for $\tau \in [\frac1n , \frac1m)$. The analytic condition is removed in the case of  curves and manifolds that can be
  `fibred' into non-degenerate  curves \cite{BVVZdiv}.  The latter includes
  $C^{\infty}$ non-degenerate submanifolds of $\R^n$ which are not
  necessarily analytic.   The complementary upper bound associated with \eqref{JB} has  been established \cite{BVVZcon, Simmons} under additional geometric restrictions on $\cM$ and  a more limited range of $\tau$. The difficulty in obtaining upper bounds lies in the notoriously difficult problem of bounding the number of rational points lying near the manifold of interest. For a discussion of this including the heuristics see \cite[\S6.1.3]{durham}.

We emphasize that all the lower bound dimension results mentioned in the above discussion are actually derived from  corresponding   stronger  divergent Jarn\'ik-type results for  $ \cH^s(\cS_n(\psi)\cap\cM) $. The main substance of this paper is the introduction of a new approach, based on the Mass Transference Principle of \cite{Mass},  that is primarily geared towards establishing dimension statements. As a consequence we are able to prove the following general result,  not just for non-degenerate manifolds but for any $C^2$ submanifold of $\R^n$.  Furthermore, the  proof is significantly easier than those based on establishing a divergent Jarn\'ik-type result.

\bigskip

\begin{thm}\label{t0}
Let $\mathcal{M}$ be any $C^2$ submanifold of $\R^n$ of codimension $m$ and let
\begin{equation}\label{tau}
\tfrac1n\le\tau<\tfrac1m.
\end{equation}
Then
\begin{equation}\label{v5}
\dim \mathcal{S}_{n}(\tau)\cap\mathcal{M} ~\geq~ s:=\frac{n+1}{\tau+1}-m\,.
\end{equation}
Furthermore,
\begin{equation}\label{vb770}
  \cH^s(\mathcal{S}_{n}(\tau)\cap\mathcal{M})=\cH^s(\cM)\,.
\end{equation}
\end{thm}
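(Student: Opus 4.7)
My approach is to apply the Mass Transference Principle of \cite{Mass} directly on $\cM$ to a well-chosen family of balls contained in $\cS_{n}(\tau)\cap\cM$. First I localize: using $C^2$ smoothness, I cover $\cM$ by finitely many Monge patches on which $\cM$ is the graph $\vf(\vx) = (\vx, F(\vx))$ of a $C^2$ map $F\colon U_0 \to \R^m$ over a compact $U_0 \subset \R^k$, $k = n-m$. Since $\cH^s$-nullity of the complement is finitely subadditive, it suffices to prove the claim in one such patch.

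Within a patch, fix a small constant $c_0 \in (0,1)$. For each $q \in \N$ and $\vv{a} \in \Z^k$ with $\vv{a}/q \in U_0$, call $(\vv{a}, q)$ \emph{good} if $\|q F(\vv{a}/q)\|_\infty \le c_0 q^{-\tau}$, where $\|\cdot\|_\infty$ is distance to $\Z^m$ in sup norm. For a good pair let $\vv{b} \in \Z^m$ be the nearest integer vector to $qF(\vv{a}/q)$; then $\vv{p}/q := (\vv{a}/q, \vv{b}/q)$ lies within $c_0 q^{-1-\tau}$ of $\vf(\vv{a}/q) \in \cM$. Around each such center I place two intrinsic balls on $\cM$:
\[
B^{\mathrm{sm}}_{\vv{a},q} := B_\cM\bigl(\vf(\vv{a}/q),\,(1-c_0) q^{-1-\tau}\bigr),
\qquad
B^{\mathrm{big}}_{\vv{a},q} := B_\cM\bigl(\vf(\vv{a}/q),\, C q^{-\beta}\bigr),
\]
with $\beta := (k+1-m\tau)/k$. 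A triangle inequality, using the $C^2$ smoothness of $\vf$ to convert intrinsic distances on $\cM$ to Euclidean distances in $\R^n$, shows $B^{\mathrm{sm}}_{\vv{a},q} \subseteq \cS_{n}(\tau) \cap \cM$ whenever $q$ is large. The exponent $\beta$ is chosen precisely so that the Mass Transference scaling $R_q = \rho_q^{s/k}$ produces
\[
s = \frac{k\beta}{1+\tau} = \frac{k+1-m\tau}{1+\tau} = \frac{n+1}{\tau+1} - m,
\]
matching the target.

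The principal obstacle is to verify the hypothesis of the Mass Transference Principle: that $\limsup B^{\mathrm{big}}_{\vv{a},q}$ has full $\cH^k$-measure in $\cM_0 := \vf(U_0)$. The counting heuristic is encouraging: one expects $\asymp q^{k-m\tau}$ good pairs with denominator $q$, so the total $\cH^k$-measure of the big balls at level $q$ is $\asymp q^{k-m\tau} R_q^k = q^{-1}$, and hence $\sum_q q^{-1}$ just diverges at the critical scale. The condition $\tau < 1/m$ enters here to ensure $\beta > 1$, so the big balls sit below the lattice spacing $1/q$ and the covering by distinct centres is non-degenerate. Turning the heuristic into an honest ubiquity/Khintchine-divergence statement on $\cM_0$ is the core technical task: one needs a lower bound on the count of good pairs in every sub-patch and enough quasi-independence among the associated balls; the $C^2$ regularity of $F$ should enter via second-order Taylor estimates that furnish a local counting/covering lemma for fractional parts of $qF(\vv{a}/q)$.

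Once this hypothesis is verified, the Mass Transference Principle yields $\cH^s(\cM_0 \cap \limsup B^{\mathrm{sm}}_{\vv{a},q}) = \cH^s(\cM_0)$. Since $\bigcup B^{\mathrm{sm}}_{\vv{a},q} \subseteq \cS_{n}(\tau) \cap \cM$, finite patching gives \eqref{vb770}, and \eqref{v5} then follows from $\cH^s(\cM) > 0$ for $s \le k$.
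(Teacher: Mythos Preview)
Your setup is sound and the exponent algebra is correct, but the proposal has a genuine gap at exactly the point you yourself flag as ``the core technical task'': you do not actually prove that $\limsup B^{\mathrm{big}}_{\vv a,q}$ has full measure on the patch. The route you sketch for doing so --- counting good pairs $(\vv a,q)$ in sub-patches and establishing enough quasi-independence to run a Khintchine-type divergence argument --- is precisely the kind of ubiquity machinery the paper is designed to \emph{bypass}. For arbitrary $C^2$ submanifolds with no non-degeneracy hypothesis, such lower bounds on the number of rational points near $\cM$ are not available in the literature and are likely genuinely hard; indeed, the existing results of this type (e.g.\ \cite{B12}) all require non-degeneracy.

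The paper closes this gap by a completely different and much simpler device: a Dirichlet-type theorem on $\cM$ proved via Minkowski's theorem for systems of linear forms. Fixing $\bm\alpha\in\cU$, one applies Minkowski not to the coordinate functions but to the \emph{tangent-plane linearisation} of $\vf$ at $\bm\alpha$: the linear forms $qg_j(\bm\alpha)+\sum_i p_i\,\partial f_j/\partial\alpha_i(\bm\alpha)-p_{d+j}$ together with $q\alpha_i-p_i$ and $q$. This yields, for each $Q$ in an infinite set, an integer solution $(p_1,\dots,p_n,q)$ with $1\le q\le Q$, $|q\alpha_i-p_i|\ll (Q\psi(Q)^m)^{-1/d}$, and the linearised form $<\tfrac12\psi(Q)$. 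A second-order Taylor expansion of $f_j$ at $\bm\alpha$ then converts the bound on the linearised form into your ``good pair'' condition $|f_j(\vv p/q)-p_{d+j}/q|<\psi(q)/q$; the $C^2$ bound on the Hessian and condition \eqref{v4} control the quadratic remainder. The upshot is that \emph{every} irrational $\bm\alpha\in\cU$ lies in $\limsup B^{\mathrm{big}}_{\vv a,q}$ --- a pointwise statement, obtained with no counting, no quasi-independence, and no Borel--Cantelli. Once that is in hand, the Mass Transference Principle (applied, as you propose, in the chart) finishes the proof exactly as you outline.
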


\medskip

\noindent Before we state the  more general result for the Hausdorff dimension of $\cS_n(\psi)  \cap \cM$ with $\psi$ not necessarily of the form $\psi(q):=q^{-\tau}$, we introduce the Mass Transference Principle and  give a `simple'  proof of Theorem~\ref{t0} in the special case that $\cM$ is an affine coordinate plane. This will bring to the forefront the main ingredients of the  new approach developed in this paper.  It is worth mentioning that for such planes (which are clearly degenerate) Khintchine-type results have recently been established \cite{Felipe1,Felipe2}  -- see also \cite[\S4.4]{durham}.

\section{Introducing the main ingredients \label{mi}}


First some notation. Throughout, $\R^k$ will be regarded as a metric space with distance induced by any fixed norm (not necessarily Euclidean) and $B(\vv x,r)$ will denote a ball centred at $\vv x \in \R^k$ of radius $r>0$. Given a ball $B=B(\vv x,r)$ and a positive real number $\lambda$, we denote by  $\lambda
 B$  the ball $B$ scaled by the factor $\lambda$; i.e.  $\lambda B(\vv x,r):= B(\vv x, \lambda r)$.
Also, given $s>0$ and a ball $B=B(\vv x,r)$ in $\R^k$, we define another ball
\begin{equation}\label{e:006}
\textstyle B^s:=B(\vv x,r^{s/k}) \ .
\end{equation}
Note that trivially $B^k=B$.

 Suppose $F$ is a
subset of $\R^k$. Given a ball $B$ in $\R^k$, let ${\rm diam}(B)$ denote the
diameter of $B$. For $\rho > 0$, a countable collection
$ \left\{B_{i} \right\} $ of balls in $\R^k$ with ${\rm diam}(B_i) \leq \rho $
for each $i$ such that $F \subset \bigcup_{i} B_{i} $ is called a {\em
  $ \rho $-cover for $F$}. Given a real number $s \ge 0$, the \emph{Hausdorff $s$-measure} of $F$ is given by
$$ {\cal H}^{s} (F) :=
\lim_{ \rho \rightarrow 0} \inf \ \sum_{i} {\rm diam}(B_i)^s\,,
$$
where the infimum is taken over all $\rho$-covers  of $F$.
When $s$ is an integer, $\hs$ is a constant multiple of
$s$--dimensional Lebesgue measure.
The
\emph{Hausdorff dimension} of $F$ is defined by $$ \dim \,
F \, := \, \inf \left\{ s : {\cal H}^{s} (F) =0 \right\}  . $$
Further details regarding
      Hausdorff measure and dimension can be found
      in~\cite{Falc}.

Observe that the set $\cS_n(\psi)$ of $\psi$-well approximable points in $\R^n$ is the $\limsup$ of the sequence of hypercubes in $\R^n$ defined by \eqref{vb1}. Recall that, by definition, given a sequence of sets $(S_i)_{i\in I}$ indexed by a countable set $I$, then
$$
\limsup S_i:=\{\vv x\in S_i:\text{for infinitely many }i\in I\}\,.
$$
The following transference theorem concerning $\limsup $ sets is the key to establishing the results of this paper.

\medskip

\begin{thm}[Mass Transference Principle]\label{thm3}
Let $\cU$ be an open subset of $\R^k$. Let $\{B_i\}_{i\in\N}$ be a sequence of balls in $\R^k$ centred in $\cU$ with ${\rm diam}(B_i)\to 0$ as $i\to\infty$. Let $s>0$ and suppose that for  any ball
$B$ in $\cU$
\begin{equation}\label{e:011}
\cH^k\big(\/B\cap\limsup_{i\to\infty}B^s_i{}\,\big)=\cH^k(B) \ .
\end{equation}
Then, for any ball $B$ in $\cU$
\begin{equation*}
\cH^s\big(\/B\cap\limsup_{i\to\infty}B^k_i\,\big)=\cH^s(B) \ .
\end{equation*}
\end{thm}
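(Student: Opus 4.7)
The plan is a proof by contradiction built around a Cantor-type construction. First I would dispose of the easy cases. When $s=k$ there is nothing to prove since $B_i^s=B_i$, and when $s>k$ the conclusion is vacuous because $\cH^s(B)=0$ for any ball $B\subset\R^k$. So only $0<s<k$ matters; here $B_i^s$ is strictly \emph{larger} than $B_i$ for small radii, and the task is to pass from the full $\cH^k$-measure statement on $\limsup B_i^s$ to the full $\cH^s$-measure statement on the smaller $\limsup B_i$.

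Fix a ball $B\subset\cU$ and suppose for contradiction that $\cH^s(B\cap\limsup B_i)<\cH^s(B)$. Then for arbitrarily small $\eta>0$ one can cover $B\cap\limsup B_i$ by balls $\{A_j=B(\vv y_j,\rho_j)\}$ with $\sum\rho_j^s<\eta$. Applying the same blow-up operation used for the $B_i$, define $\tilde A_j:=B(\vv y_j,\rho_j^{s/k})$; since $s<k$ and the $\rho_j$ are small, $A_j\subset\tilde A_j$, and the inflated balls have total Lebesgue measure of order $\sum\rho_j^s$, hence less than a constant multiple of $\eta$. The strategy is to exhibit a point of $B$ lying in $\limsup B_i$ but outside $\bigcup_j A_j$, which would contradict the covering property.

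To produce such a point I would construct inductively a nested sequence of original balls $B_{i_1}\supset B_{i_2}\supset\cdots$ with $i_n\to\infty$ and radii shrinking to zero, chosen so that each $B_{i_n}$ lies in the complement of (the relevant part of) $\bigcup_j\tilde A_j$. The nested intersection is a single point $\vv x^*$, which belongs to $\limsup B_i$ by construction; the avoidance at every scale, combined with $A_j\subset\tilde A_j$, forces $\vv x^*\notin\bigcup_j A_j$. The inductive step works as follows: inside the current ball $B_{i_n}$, apply the hypothesis locally to see that the inflated balls $B_i^s$ for large $i$ cover almost all of $B_{i_n}$ in Lebesgue measure; since $\bigcup_j\tilde A_j$ has small total Lebesgue measure, a volume comparison lets us pick some $B_{i_{n+1}}^s\subset B_{i_n}$ essentially disjoint from $\bigcup_j\tilde A_j$, after which the core $B_{i_{n+1}}\subset B_{i_{n+1}}^s$ automatically avoids $\bigcup_j A_j$.

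The main technical obstacle is making this volume bookkeeping precise at every stage. One must truncate the cover $\{A_j\}$ at a scale comparable with the current ball $B_{i_n}$ (so that inflated covering balls of very different size do not straddle $B_{i_n}$ and spuriously dominate it), control overlaps amongst the $\{B_i^s\}$ via a Vitali or $5r$-covering argument, and verify that the excess volume inside $B_{i_n}$ after deleting the inflated cover is strictly positive — this is exactly the gap between the $\cH^s$-mass of the cover and the $\cH^k$-mass of its blow-up. Keeping this gap under control at every level is what forces the exponent $s/k$ in the inflation rule and drives the whole transference.
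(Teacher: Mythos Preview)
The paper does not actually prove this theorem: immediately after the statement it simply records that ``the above version of the Mass Transference Principle is easily deduced from the original statement appearing as Theorem~2 in \cite{Mass}''. So what you are sketching is not a comparison with the paper's argument but an attempt at reproving the result of \cite{Mass} from scratch.

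Your sketch has the right flavour --- a Cantor-type construction driven by the volume discrepancy between a cover $\{A_j\}$ and its $s/k$-inflation $\{\tilde A_j\}$ is exactly the engine of the original proof --- but there is a genuine gap in the logical setup. From $\cH^s(B\cap\limsup B_i)<\cH^s(B)=\infty$ (for $0<s<k$) you only get that the $s$-measure is \emph{finite}; you cannot then choose covers with $\sum_j\rho_j^s<\eta$ for \emph{arbitrarily small} $\eta$, only with $\sum_j\rho_j^s$ close to that finite value. Consequently the volume argument you describe, which needs the Lebesgue measure of $\bigcup_j\tilde A_j$ to be small relative to the current ball at \emph{every} level, only goes through when $\sum_j\rho_j^s$ is below some threshold $\eta_0$ depending on $B$. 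The nested single-ball construction then produces one point of $B\cap\limsup B_i$ outside any cover with $\sum_j\rho_j^s<\eta_0$, which yields $\cH^s(B\cap\limsup B_i)\ge\eta_0>0$ but not $\cH^s(B\cap\limsup B_i)=\infty$.

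The proof in \cite{Mass} avoids this by building not a single nested sequence but a genuinely branching Cantor set $K\subset B\cap\limsup B_i$, distributing a probability measure $\mu$ on $K$ level by level, and verifying a local bound $\mu(B(\vv x,r))\ll r^s$; the mass distribution principle then gives $\cH^s(K)>0$, and a separate argument (using that one may take $K$ as large as one likes, or the $G_\delta$ structure of $\limsup$ sets) upgrades this to $\cH^s=\infty$. Your outline is missing both the branching/measure step and the upgrade to infinite measure.
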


\bigskip

\noindent The above version of the Mass Transference Principle is easily  deduced from the original statement appearing as Theorem~2 in \cite{Mass}.  

\medskip

Armed with the Mass Transference Principle it is easy to deduce the Jarn\'ik-Besicovitch theorem directly from Dirichlet's theorem.   For reasons that will soon become apparent  we provide the details.  To begin with, observe that $\vv y=(y_1,\dots,y_n)\in \mathcal{S}_{n}(\tau)$  if and only if
\begin{equation}\label{bpqsv}
\vv y\in  B_{\textbf{p},q}:= \left\lbrace\boldsymbol{\vv x}=(x_{1},\dots,x_{n})\in\mathbb{R}^n: \max_{1\le i\le n} \left|x_{i}-\frac{p_{i}}{q}\right|<q^{-\tau-1}\right\rbrace  \,
\end{equation}
for infinitely many $ q \in \N$ and $\vv p=(p_1,\dots,p_n)\in\Z^n$; that is
$$
 \mathcal{S}_{n}(\tau)  = \limsup_{q\to\infty} B_{\vv p,q}  \, .
$$
Next, in view of Dirichlet's theorem we have that
$$
 \limsup_{q\to\infty} B^s_{\vv p,q} =  \R^n  \quad {\rm where }  \quad    s:=\frac{n+1}{\tau+1} \,
$$
and  the ball  $B^s_{\vv p,q}$   associated with $B_{\vv p,q}$ is defined via \eqref{e:006} with $k=n$.    Thus, for any ball  $B$ in $\R^n$
 it  trivially follows that $$
\cH^n(B\cap \limsup_{q\to\infty} B^s_{\vv p,q})=\cH^n(B) \, .
$$
In turn, for any $\tau > 1/n$ (so that $ s < n $)  the Mass Transference Principle implies that
$$
\cH^s(\mathcal{S}_{n}(\tau))=\cH^s(\R^n) = \infty  \, .
$$
Hence, by the definition of Hausdorff dimension, $ \dim \mathcal{S}_{n}(\tau)  \ge s $. This is the hard part in establishing the Jarn\'ik-Besicovitch theorem.  The  complementary upper bound is pretty straightforward -- see for example \cite[\S2]{durham}. Observe that we actually proved a lot more than simply the Jarn\'ik-Besicovitch theorem.  We have shown that the Hausdorff $s$-measure of $\mathcal{S}_{n}(\tau)$ at the critical exponent is infinite.

We shall now see that to establish Theorem~\ref{t0} in the case of affine coordinate planes is essentially as easy as the proof of the Jarn\'ik-Besicovitch theorem just given.  To the best of our knowledge, even this restricted version of Theorem~\ref{t0} is new and can be thought of as the  `fibred' version of the classical Jarn\'ik-Besicovitch theorem.

%

\begin{cor}\label{corA}
Let $n > m  \ge 1 $ be integers and $d:= n-m$.  Let $\tau$ satisfy \eqref{tau} and given $\bm\beta=(\beta_1,\dots,\beta_m) \in \R^m$, let
\begin{equation}\label{Pi}
\Pi_{\bm\beta}:=\big\{(\alpha_1,\dots,\alpha_d,\beta_1,\dots,\beta_m): \bm\alpha=(\alpha_1,\dots,\alpha_d)\in\R^d\,\big\}\,\subset\,\R^n\, .
\end{equation}
Then
\begin{equation}\label{sv5}
\dim \mathcal{S}_{n}(\tau)\cap\Pi_{\bm\beta} ~\geq~ s:=\frac{n+1}{\tau+1}-m\,.
\end{equation}
Furthermore,
\begin{equation}\label{svb770}
  \cH^s(\mathcal{S}_{n}(\tau)\cap \Pi_{\bm\beta} )=\cH^s(\Pi_{\bm\beta})\,.
\end{equation}
\end{cor}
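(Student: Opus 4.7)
The plan is to identify $\Pi_{\bm\beta}$ with $\R^d$ via the projection onto the first $d$ coordinates and apply the Mass Transference Principle (Theorem~\ref{thm3}) inside this copy of $\R^d$, in the same spirit as the proof of the Jarn\'ik--Besicovitch theorem given above. Writing an integer vector as $\vv p=(\vv p',\vv p'')\in\Z^d\times\Z^m$, the point $(\bm\alpha,\bm\beta)\in\Pi_{\bm\beta}$ lies in $\mathcal{S}_n(\tau)$ if and only if the pair of sup-norm inequalities
$$
\|q\bm\alpha-\vv p'\|_\infty<q^{-\tau}\qquad\text{and}\qquad\|q\bm\beta-\vv p''\|_\infty<q^{-\tau}
$$
holds simultaneously for infinitely many $(\vv p,q)\in\Z^n\times\N$. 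Accordingly, set
$$
\cJ:=\{(\vv p,q)\in\Z^n\times\N:\|q\bm\beta-\vv p''\|_\infty<q^{-\tau}\},
$$
and for $(\vv p,q)\in\cJ$ let $\tilde B_{\vv p,q}\subset\R^d$ denote the sup-norm ball of radius $q^{-\tau-1}$ centred at $\vv p'/q$. Under the identification,
$$
\mathcal{S}_n(\tau)\cap\Pi_{\bm\beta}=\limsup_{(\vv p,q)\in\cJ}\tilde B_{\vv p,q}.
$$

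A direct calculation gives $(\tau+1)s=d+1-m\tau$, and so by \eqref{e:006} with $k=d$ the $s$-scaled ball $\tilde B_{\vv p,q}^s$ has radius $q^{-1-\theta}$, where $\theta:=(1-m\tau)/d$. The codimension hypothesis $\tau<1/m$ from \eqref{tau} is used precisely to guarantee $\theta>0$. Unpacking definitions, $\bm\alpha\in\tilde B_{\vv p,q}^s$ if and only if $\|q\bm\alpha-\vv p'\|_\infty\le q^{-\theta}$. By Theorem~\ref{thm3}, it therefore suffices to establish that $\limsup_{(\vv p,q)\in\cJ}\tilde B_{\vv p,q}^s$ has full $d$-dimensional Lebesgue measure in $\R^d$; both \eqref{svb770} and \eqref{sv5} will then follow immediately.

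The main step, and expected obstacle, is this Lebesgue-full claim, and the plan is to prove it by Minkowski's theorem on linear forms, which will play here the role that Dirichlet's theorem played in the Jarn\'ik--Besicovitch argument above. Given any $\bm\alpha\in\R^d$ and any $Q\ge 1$, consider the symmetric convex body in $\R^{n+1}$, with coordinates $(\vv u,\vv v,r)\in\R^d\times\R^m\times\R$, defined by $\|\vv u\|_\infty\le Q^{-\theta}$, $\|\vv v\|_\infty\le Q^{-\tau}$ and $|r|\le Q$. Its volume equals
$$
2^{n+1}Q^{1-d\theta-m\tau}=2^{n+1},
$$
by the very choice of $\theta$. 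Minkowski's theorem applied to this body and to the unimodular image of $\Z^{n+1}$ under $(\vv p,q)\mapsto(q\bm\alpha-\vv p',\,q\bm\beta-\vv p'',\,q)$ therefore produces a non-zero $(\vv p_Q,q_Q)\in\Z^{n+1}$ satisfying $\|q_Q\bm\alpha-\vv p_Q'\|_\infty\le Q^{-\theta}$, $\|q_Q\bm\beta-\vv p_Q''\|_\infty\le Q^{-\tau}$ and $|q_Q|\le Q$. For $Q>1$ the radii on the first $n$ coordinates are both less than $1$, which forces $q_Q\ne 0$, so we may take $q_Q\ge 1$. Since $q_Q\le Q$ and $\theta,\tau>0$, the same inequalities persist with $q_Q$ in place of $Q$ on the right-hand sides, whence $(\vv p_Q,q_Q)\in\cJ$ and $\bm\alpha\in\tilde B_{\vv p_Q,q_Q}^s$. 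As $Q\to\infty$ either $q_Q\to\infty$, giving infinitely many distinct indices, or $q_Q$ stays bounded, which forces $(\bm\alpha,\bm\beta)\in\Q^n$; in the latter exceptional case the positive integer multiples of a common denominator of $(\bm\alpha,\bm\beta)$ furnish infinitely many valid indices in $\cJ$ directly. Hence every $\bm\alpha\in\R^d$ lies in $\limsup_{(\vv p,q)\in\cJ}\tilde B_{\vv p,q}^s$, and Theorem~\ref{thm3} then yields the corollary.
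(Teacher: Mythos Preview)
Your proof is correct and follows essentially the same route as the paper's: project $\Pi_{\bm\beta}$ onto $\R^d$, use Minkowski's theorem on linear forms with exponents $\theta=(1-m\tau)/d$ on the $\bm\alpha$-coordinates and $\tau$ on the $\bm\beta$-coordinates (your $\theta$ is precisely the paper's $\eta$ defined by $d\eta+m\tau=1$), and then apply the Mass Transference Principle. The only cosmetic difference is that you treat rational $\bm\alpha$ directly, whereas the paper simply discards $\Q^d$ as a null set; also, to make the membership ``$(\vv p_Q,q_Q)\in\cJ$'' rigorous you should invoke the linear-forms version of Minkowski's theorem that yields strict inequalities on all but the $|q|\le Q$ constraint, exactly as the paper does.
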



 \bigskip

\begin{proof}

 There is nothing to prove in the case $\tau=\tfrac1n$ since by Dirichlet's theorem, $\cS_n(\frac1n)=\R^n$. Hence, we will assume that $\tfrac1n<\tau< \tfrac1m$ and so by definition $ s < d$.  Hence, it follows that
 $$
 \cH^s(\Pi_{\bm\beta})  =  \infty \, .
 $$
 Next,  given $\tau$ and $\bm\beta \in \R^m$ let
$$
B(\bm\beta;\tau):=\{q\in\N:\max_{1\le i\le m}\|q\beta_i\|<q^{-\tau}\}\,,
$$
where $\|\cdot\|$ stands for distance from the nearest integer. Also, given a set $X \subseteq \R^{n}$ let $\pi_d (X) $ denote the orthogonal projection of $X$  onto the first $d$ coordinates of $\R^{n}$.   Then,  it is easily verified that  ${\bm\alpha} \in  \pi_d \big( \mathcal{S}_{n}(\tau)  \cap\Pi_{\bm\beta} \big) $  if and only if
\begin{equation}\label{bpq}
{\bm\alpha} \in B_{\textbf{p},q}:= \left\lbrace \vv x =(x_{1},\dots,x_{d})\in\mathbb{R}^d: \max_{1\le i\le d} \left|x_{i}-\frac{p_{i}}{q}\right|<q^{-\tau-1}\right\rbrace  \,
\end{equation}
for infinitely many integers $ q \in B(\bm\beta;\tau)$ and $\vv p=(p_1,\dots,p_n)\in\Z^n$; that is
\begin{equation}\label{shitBREXIT}
\pi_d \big( \mathcal{S}_{n}(\tau)  \cap\Pi_{\bm\beta} \big) :=\limsup_{q\to\infty,\ q\in B(\bm\beta;\tau)} \!\! B_{\vv p,q}  \, .
\end{equation}
The projection map $\pi_d$ is bi-Lipschitz and thus  the measure part \eqref{svb770} of the corollary follows on showing that
\begin{equation}\label{svb7700}
\cH^s\Big(\pi_d \big( \mathcal{S}_{n}(\tau)  \cap\Pi_{\bm\beta} \big) \Big)=  \infty \,.
\end{equation}
The dimension part \eqref{sv5} of the corollary follows directly from  \eqref{svb770} and the definition of Hausdorff dimension. This completes the proof of Corollary \ref{corA} modulo  \eqref{svb7700}.

We now establish \eqref{svb7700}.   Define $\eta$ to be the real number such that
\begin{equation}\label{vb01}
d\eta+m\tau=1.
\end{equation}
It is easily seen that $0<\eta<\tau$.  By Minkowski's theorem for systems of linear forms \cite{Schmidt80}, for any $\bm\alpha\in\R^d$ and for any integer $Q\ge 1$ there exists $(p_1,\dots,p_n,q)\in\Z^{n+1}\setminus\{\vv0\}$ such that
\begin{equation}\label{v2}
  \left\{\begin{array}{l}
|q\alpha_i-p_i|< Q^{-\eta}\quad\quad\ (1\le i\le d),\\[2ex]
|q\beta_{j}-p_{d+j}|<Q^{-\tau}\quad(1\le j \le m),\\[2ex]
|q|\le Q\,.
         \end{array}
  \right.
\end{equation}
Since both $\eta$ and $\tau$ are positive, we have that $Q^{-\eta}\le1$, $Q^{-\tau}\le1$ and we necessarily have that $q\neq0$. Furthermore, without loss of generality, we can assume that $q>0$.
Assume that $\alpha_1$ is irrational. Then, since $\eta>0$,
there must be infinitely many different $q\in\N$ amongst the solutions $(p_1,\dots,p_n,q)$ to \eqref{v2} taken over all $Q>1$. Indeed, if the same $q$ repeatedly occurred in the first inequality, the left hand side would  be a fixed positive constant for this $q$.   However, the right hand side of the first inequality tend to zero as $Q$ increases and we obtain a contradiction for $Q$ large.  Therefore, the system of inequalities
\begin{equation}\label{v2+}
  \left\{\begin{array}{l}
|q\alpha_i-p_i|< q^{-\eta}\quad\quad\ (1\le i\le d),\\[2ex]
|q\beta_{j}-p_{d+j}|<q^{-\tau}\quad(1\le j \le m)
         \end{array}
  \right.
\end{equation}
is satisfied  for infinitely many different $q\in\N$ and $(p_1,\dots,p_n)\in\Z^n$. This means that
$$
 \limsup_{q\to\infty,\ q\in B(\bm\beta;\tau)} \!\! B^s_{\vv p,q} \ \ \supseteq   \ \  \R^d\setminus\Q^d   \, ,  $$
where
\begin{equation}\label{s}
s:=\frac{(\eta+1)d}{\tau+1}  \ ~\stackrel{\eqref{vb01}}{=}  \  ~\frac{n+1}{\tau+1}-m
\end{equation}
and  the ball  $B^s_{\vv p,q}$   associated with $B_{\vv p,q}$ is defined via \eqref{e:006} with $k=d$.    Thus, for any ball  $B$ in $\R^d$
 it  trivially follows that
$$
 \cH^d \big( \ B\cap \limsup_{q\to\infty,\ q\in B(\bm\beta;\tau)} \!\! B^s_{\vv p,q} \ \big)=\cH^d(B)  \, .
$$
Now $s < d$ and so, by the Mass Transference Principle, we conclude that for any ball $B$ in $\R^d$
$$
\cH^s \big( \ B\cap \limsup_{q\to\infty,\ q\in B(\bm\beta;\tau)} \!\! B_{\vv p,q} \ \big)=\cH^s(B)=\infty  \, .
$$
This together with \eqref{shitBREXIT} implies  \eqref{svb7700}  and thereby completes the proof of the corollary.
\end{proof}

\bigskip

\begin{rem}
When it comes to establishing  Theorem~\ref{t0} (or rather Theorem  \ref{t1} below)   for  arbitrary  submanifolds $\cM $  of $\R^n$, it will be apparent that  the role of the affine coordinate plane $\Pi_{\bm\beta}$ in the above proof will be played by tangent planes to $\cM$ and the set $B(\bm\beta;\tau)$ will correspond to  rational points lying close to $\cM$.
\end{rem}

\section{The general case}

The following is a version of Theorem~\ref{t0} for general approximating functions $\psi$.

\begin{thm}\label{t1}
Let $\mathcal{M}$ be any $C^2$ submanifold of $\R^n$ of codimension $m$ and let $\psi:\N\to  \R^+$ be any monotonic function such that
\begin{equation}\label{v4}
\limsup_{Q\to\infty}Q^{\tfrac1m}\psi(Q)=\infty\,.
\end{equation}
Suppose that for some $\tau$ with $\frac1n\le\tau\le\frac1m$ we have that
\begin{equation}\label{v4m}
\inf_{Q\in\N}Q^\tau\psi(Q)>0\,.
\end{equation}
Then
\begin{equation}\label{v5m}
\dim \mathcal{S}_{n}(\psi)\cap\mathcal{M} ~\geq~ s:=\frac{n+1}{\tau+1}-m \,.
\end{equation}
Furthermore,
\begin{equation}\label{vb770m}
  \cH^s(\mathcal{S}_{n}(\psi)\cap\mathcal{M})=\cH^s(\cM)   \,.
\end{equation}
\end{thm}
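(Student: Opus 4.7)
The plan is to adapt the Corollary~\ref{corA} argument to the $C^2$ manifold setting: localise $\cM$ to graph patches, apply Minkowski on the tangent-plane system to produce rational candidates, and invoke the Mass Transference Principle in the projection to $\R^d$, where $d:=n-m$. The roles of $\Pi_{\bm\beta}$ and $B(\bm\beta;\tau)$ from Corollary~\ref{corA} are played, respectively, by tangent planes to $\cM$ and by rationals lying close to $\cM$.

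\textbf{Localisation.} Cover $\cM$ by countably many graph patches $\{(\bm\alpha,\vv f(\bm\alpha)):\bm\alpha\in U\}$ with $U\subset\R^d$ open bounded and $\vv f:U\to\R^m$ of class $C^2$ (after a reordering of coordinates). Since $\pi_d$ is bi-Lipschitz on each patch, by $\sigma$-subadditivity of $\cH^s$ it suffices to show, for each patch and each ball $B\subset U$,
\begin{equation*}
\cH^s\bigl(B\cap\pi_d(\cS_n(\psi)\cap \cM)\bigr)\,=\,\cH^s(B).
\end{equation*}
Set $\eta\in[0,\tfrac1n]$ by $d\eta+m\tau=1$, so that $s/d=(1+\eta)/(\tau+1)$ exactly as in Corollary~\ref{corA}. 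For each $\bm\alpha\in U$ and each large $Q$, Minkowski's theorem applied to the linear form system given by the tangent plane at $(\bm\alpha,\vv f(\bm\alpha))$ produces a non-trivial solution $(\vv p,q)\in\Z^{n+1}$ with $0<q\le Q$ and
\begin{equation*}
|q\alpha_i-p_i|<Q^{-\eta}\ (1\le i\le d),\qquad |qf_j(\bm\alpha)-p_{d+j}|<\kappa Q^{-\tau}\ (1\le j\le m),
\end{equation*}
for an absolute constant $\kappa$. For $\bm\alpha$ outside a dimension-zero exceptional set, the argument of Corollary~\ref{corA} gives infinitely many distinct denominators $q$.

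\textbf{MTP and conclusion.} Let $\tilde B_{\vv p,q}\subset\R^d$ be the ball of radius $\psi(q)/q$ centred at $\tilde{\bm\alpha}:=(p_1/q,\dots,p_d/q)$, and restrict attention to \emph{good} $(\vv p,q)$---those for which the Lipschitz continuity of $\vv f$, the $C^2$ Taylor approximation of $\cM$ by its tangent plane at $\tilde{\bm\alpha}$, and a preliminary rescaling $\psi\mapsto\psi/C$ (the hypotheses (\ref{v4}) and (\ref{v4m}) being invariant under such rescaling, and the inclusion $\cS_n(\psi/C)\subset\cS_n(\psi)$ permitting the reduction) together imply $\tilde B_{\vv p,q}\subset\pi_d(\cS_n(\psi)\cap\cM)$. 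The Minkowski step places the source point $\bm\alpha$ in the dilated ball $\tilde B^s_{\vv p,q}$: the required inequality $Q^{-\eta}/q\le(\psi(q)/q)^{s/d}$, upon raising to the power $(\tau+1)/(1+\eta)$ and using $q\le Q$, reduces precisely to \eqref{v4m}. Hence $\cH^d\bigl(B\cap\limsup_{\text{good}}\tilde B^s_{\vv p,q}\bigr)=\cH^d(B)$ for every ball $B\subset U$, and since $s<d$ for $\tau>\tfrac1n$ (the case $\tau=\tfrac1n$ being trivial via Dirichlet and \eqref{v4m}), Theorem~\ref{thm3} yields $\cH^s\bigl(B\cap\limsup\tilde B_{\vv p,q}\bigr)=\cH^s(B)$, which combined with the previous containment delivers \eqref{vb770m} and hence \eqref{v5m}.

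\textbf{Main obstacle.} The principal difficulty lies in the goodness claim. The bare Lipschitz bound on $\vv f$ gives $|qf_j(\bm\alpha')-p_{d+j}|\lesssim L\psi(q)+LQ^{-\eta}+\kappa Q^{-\tau}$ for $\bm\alpha'\in\tilde B_{\vv p,q}$, and the $LQ^{-\eta}$ term is \emph{not} automatically dominated by $\psi(q)\asymp q^{-\tau}$ whenever $\tau>\eta$; the rescaling $\psi\mapsto\psi/C$ alone is insufficient, and a genuinely quadratic ingredient is needed. This is supplied by the $C^2$ Taylor approximation of $\cM$ by its tangent plane at $\tilde{\bm\alpha}$, which upgrades the first-order Lipschitz error into a controllable quadratic remainder; its interplay with \eqref{v4m} (matching the Minkowski scale $Q^{-\tau}$ to $\psi$), the monotonicity of $\psi$, and the bound $q\le Q$ constitutes the heart of the proof. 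The role of \eqref{v4} is more subtle: it precludes the degenerate boundary scenario in which $\psi(Q)\asymp Q^{-1/m}$ uniformly, thereby guaranteeing that the lim sup set at the critical exponent has nontrivial content.
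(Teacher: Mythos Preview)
Your overall architecture---localise to graph patches, run Minkowski, apply the Mass Transference Principle in the projection---matches the paper's. However, there is a genuine gap at the Minkowski step. Although you invoke the ``tangent-plane system'', the output you display, $|qf_j(\bm\alpha)-p_{d+j}|<\kappa Q^{-\tau}$, is what the \emph{point} system gives (the one you would use if $\cM$ were the affine plane $\Pi_{\vv f(\bm\alpha)}$); a genuine tangent-plane system does not yield this inequality. Starting from the point-system output, the passage to the rational base point $\vv p/q$ incurs a first-order error $|qf_j(\vv p/q)-qf_j(\bm\alpha)|$, and its Taylor expansion (at either $\bm\alpha$ or $\tilde{\bm\alpha}$) still contains the linear contribution $\sum_i(p_i-q\alpha_i)\,\partial_i f_j$, of order $Q^{-\eta}$---precisely the $LQ^{-\eta}$ term you flag. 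The $C^2$ hypothesis does \emph{not} remove this linear term; it only bounds the quadratic remainder, which is of lower order here. Since $\eta<\tau$ whenever $\tau>\tfrac1n$, no rescaling of $\psi$ can absorb $Q^{-\eta}$ into $\psi(q)\ge\kappa q^{-\tau}$.

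The missing idea, which is the content of the paper's Theorem~\ref{Dir}, is to build the tangent correction \emph{into} the Minkowski system: replace the forms $qf_j(\bm\alpha)-p_{d+j}$ by $qg_j(\bm\alpha)+\sum_i p_i\,\partial_i f_j(\bm\alpha)-p_{d+j}$, where $g_j:=f_j-\sum_i\alpha_i\,\partial_i f_j$, bound these by $\tfrac12\psi(Q)$, and adjust the $\alpha$-bounds to $(2^{-m}Q\psi(Q)^m)^{-1/d}$ so that the Minkowski volume remains~$1$. Taylor expansion at $\bm\alpha$ then gives the identity
\[
qg_j(\bm\alpha)+\textstyle\sum_i p_i\,\partial_i f_j(\bm\alpha)-p_{d+j}\;=\;qf_j(\vv p/q)-p_{d+j}-qR_j\,,
\]
the linear terms having cancelled \emph{by construction}, with $R_j$ purely second-order. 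One has $|qR_j|\le Cd^2\,q^{-1}(2^{-m}Q\psi(Q)^m)^{-2/d}$, so $|qR_j|<\tfrac12\psi(q)$ holds once $q\psi(q)\cdot(Q\psi(Q)^m)^{2/d}$ exceeds a fixed constant; this is exactly where \eqref{v4} enters (forcing $Q\psi(Q)^m\to\infty$ along a subsequence), together with $\inf_q q\psi(q)>0$ (a consequence of \eqref{v4m} since $\tau\le1$). The upshot is $|qf_j(\vv p/q)-p_{d+j}|<\psi(q)$, so the rational point is genuinely close to $\cM$; from there only the first-order Lipschitz bound is needed for the MTP inclusion, as you describe in your final step.
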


\bigskip

Observe that any approximating function $\psi$ given by $\psi(q)=q^{-\tau}$ with $\frac1n\le\tau<\frac1m$ satisfies  conditions \eqref{v4}  and \eqref{v4m}. Thus, Theorem~\ref{t0} is  a simple consequence of Theorem~\ref{t1}. The following is another consequence of Theorem~\ref{t1} expressed in terms of \emph{the upper order at infinity of $1/\psi$} defined by
$$
\hat\tau_\psi:=\limsup_{Q\to\infty}\frac{-\log\psi(Q)}{\log Q}\,.
$$

\bigskip

\begin{cor}\label{cor1}
Let $\mathcal{M}$ be any $C^2$ submanifold of $\R^n$ of codimension $m$ and let $\psi:\N\to\R^+$ be any monotonic function satisfying \eqref{v4} and
\begin{equation}\label{v4o}
\inf_{Q\in\N}Q^{\tfrac1m}\psi(Q)>0\,.
\end{equation}
Suppose that $\hat\tau_\psi\ge\tfrac1n$. Then
\begin{equation}\label{v4*}
\dim\cS_n(\psi)\cap\cM\ge\frac{n+1}{\hat\tau_\psi+1}-m\,.
\end{equation}
\end{cor}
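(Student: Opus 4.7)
The plan is to deduce Corollary~\ref{cor1} directly from Theorem~\ref{t1} by choosing the parameter $\tau$ there as close as possible to $\hat\tau_\psi$. The only question is whether $\hat\tau_\psi$ lies inside or outside the admissible range $[1/n,1/m]$ appearing in Theorem~\ref{t1}, so I will split the argument into two regimes.

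In the regime $\hat\tau_\psi\ge 1/m$, I will apply Theorem~\ref{t1} with $\tau=1/m$. The divergence hypothesis \eqref{v4} is common to both statements; the lower-bound hypothesis \eqref{v4m} with $\tau=1/m$ reads $\inf_{Q\in\N} Q^{1/m}\psi(Q)>0$, which is precisely the standing assumption \eqref{v4o}; and $1/n\le 1/m$ since $m\le n$. Theorem~\ref{t1} will then yield
$$\dim \cS_n(\psi)\cap\cM \;\ge\; \frac{n+1}{1/m+1}-m \;=\; \frac{m(n-m)}{m+1}.$$
A direct computation shows that $\hat\tau_\psi\ge 1/m$ is equivalent to $\frac{n+1}{\hat\tau_\psi+1}-m\le\frac{m(n-m)}{m+1}$, so \eqref{v4*} will follow immediately.

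In the complementary regime $\hat\tau_\psi<1/m$, I will fix an arbitrary $\tau$ with $\hat\tau_\psi<\tau<1/m$ and send $\tau\searrow\hat\tau_\psi$ at the end. By definition of $\hat\tau_\psi$ as the limsup of $-\log\psi(Q)/\log Q$, for all sufficiently large $Q$ we have $\psi(Q)>Q^{-\tau}$, i.e.\ $Q^{\tau}\psi(Q)>1$; combined with positivity of $\psi$ on the finite exceptional set of small $Q$, this secures \eqref{v4m} for this choice of $\tau$. Since $\tau>\hat\tau_\psi\ge 1/n$ and $\tau<1/m$, the hypotheses of Theorem~\ref{t1} are in force, and it will give $\dim \cS_n(\psi)\cap\cM\ge\frac{n+1}{\tau+1}-m$. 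Letting $\tau\searrow\hat\tau_\psi$ and using continuity of $\tau\mapsto\frac{n+1}{\tau+1}$ then produces \eqref{v4*}.

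No genuine obstacle arises beyond the invocation of Theorem~\ref{t1} itself; the entire argument is a transparent reduction. The only mildly subtle point is that the limsup definition of $\hat\tau_\psi$ forces us to pick $\tau$ strictly above $\hat\tau_\psi$ in order to ensure the lower bound \eqref{v4m}, which is precisely why the limiting procedure is needed in the second regime. The first regime, in contrast, is settled by a single application of Theorem~\ref{t1} at the critical endpoint $\tau=1/m$, with the hypothesis \eqref{v4o} playing exactly the role of \eqref{v4m}.
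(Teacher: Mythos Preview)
Your proposal is correct and follows essentially the same route as the paper's proof: both split according to whether $\hat\tau_\psi$ hits the upper endpoint $1/m$ or lies strictly below it, apply Theorem~\ref{t1} at $\tau=1/m$ in the former case (where \eqref{v4m} coincides with \eqref{v4o}), and in the latter case pick $\tau\in(\hat\tau_\psi,1/m)$, verify \eqref{v4m} from the definition of $\hat\tau_\psi$, and let $\tau\searrow\hat\tau_\psi$. The only cosmetic difference is that the paper first observes that \eqref{v4o} forces $\hat\tau_\psi\le 1/m$, so your ``regime $\hat\tau_\psi\ge 1/m$'' is in fact just the single endpoint $\hat\tau_\psi=1/m$; your extra computation comparing $\frac{n+1}{\hat\tau_\psi+1}-m$ with $\frac{m(n-m)}{m+1}$ is therefore unnecessary, though harmless.
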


\bigskip

\begin{proof}
Note that, by \eqref{v4o}, $\hat\tau_\psi\le \tfrac1m$. First assume that $\hat\tau_\psi=\tfrac1m$. Let $\tau=\tau_\psi$. Then \eqref{v4m} becomes identical to \eqref{v4o} and Theorem~\ref{t1} is applicable. In this case \eqref{v4*} coincides with \eqref{v5m}.

Now assume that $\frac1n\le\hat\tau_\psi<\frac1m$ and fix any  $\tau$ such that $\hat\tau_\psi<\tau<\frac1m$. It readily follows from the definition of $\hat\tau_\psi$ that for all sufficiently large $q$ we have that
$$
\frac{-\log\psi(q)}{\log q}<\tau\,,
$$
or equivalently $\psi(q)>q^{-\tau}$ for all sufficiently large $q$. This implies the validity of \eqref{v4m} and thus Theorem~\ref{t1} is applicable. In turn,  since $\tau$ can be taken to be arbitrarily close to $\hat\tau_\psi$ the desired lower bound \eqref{v4*}  then follows from \eqref{v5m}.
\end{proof}

\bigskip

\begin{rem}
In the case $\hat\tau_\psi$ is strictly bigger than the lower order at infinity of $1/\psi$, that is
$$
\tau^*_\psi:=\liminf_{Q\to\infty}\frac{-\log\psi(Q)}{\log Q},
$$
\eqref{v4*} is not always sharp. For example, this is the case for non-degenerate submanifolds of $\R^n$ \cite{B12}, where the lower bound is shown to be
\begin{equation}\label{vbn}
\dim\cS_n(\psi)\cap\cM\ge \frac{n+1}{\tau^*_\psi+1}-m.
\end{equation}
\end{rem}

\bigskip

\begin{rem}  Observe that if $\tau < \frac1m$, then condition \eqref{v4m}  implies \eqref{v4}. In the case $ \tau=1/m$  condition \eqref{v4m}  implies  that
\begin{equation}\label{sv1}
\limsup_{Q\to\infty}Q^{\tfrac1m}\psi(Q) > 0 \,.
\end{equation}
However, this alone is not sufficient. To see this, consider the plane $\Pi_{\bm\beta}$ given by \eqref{Pi} with $\bm\beta$ being any badly approximable point in $\R^m$. This means that there exists a constant $c_0>0$ such that
$$
c_0:=\inf_{q\in \N}\,\,q^{\tfrac1m}\max_{1\le j\le m}\|q\beta_j\|>0\,.
$$
Let
\begin{equation}\label{v6}
\psi(q)=c_0\,q^{-\tfrac1m}\,.
\end{equation}
Clearly condition \eqref{sv1} is satisfied. Indeed, the corresponding limit exists and is strictly positive and finite  and thus $\tau_\psi=\frac1m$. However, by our choice of $\bm\beta$ and $\psi$, we have that
$$\Pi_{\bm\beta}\cap\cS_n(\psi)=\emptyset \, ,$$ and hence the conclusions of either the theorem or corollary are not valid.
\end{rem}

\bigskip

\begin{rem}
Despite the above remark, it should be noted that imposing additional `curvature' or Diophantine conditions on $\cM$ may help extending the range of $\psi$ for which \eqref{v4*} holds. For instance, it follows from \cite[Theorem~7.2]{B12} that for non-degenerate analytic curves \eqref{vbn} holds if $\frac1n\le\tau^*_\psi<\tfrac{3}{2n-1}$.
\end{rem}

\section{A Dirichlet type theorem for rational approximations to manifolds}

Let $\mathcal{M}$ be any $C^2$ submanifold of $\R^n$ of codimension $m$ and let $d:=n-m$ denote the dimension of $\mathcal{M}$.
Without loss of generality,  we will assume that $\cM$ is given by a Monge parametrisation, that is
\begin{equation}  \label{monge}
\cM:= \big\lbrace (\boldsymbol{\alpha}, \vv f(\boldsymbol{\alpha})) \in \mathbb{R}^n: \bm\alpha=(\alpha_{1},\dots, \alpha_{d}) \in \mathcal{U}\,\big\rbrace\,,
\end{equation}
where $\mathcal{U}$ is an open subset of $\mathbb{R}^d$ and where $\textbf{f}= (f_{1},\dots, f_{m})$ is defined and twice continuously differentiable on $\mathcal{U}$.
We furthermore assume without loss of generality that
\begin{equation}\label{D}
D:= \max_{\substack{ 1\leq j\le m\\ 1\leq i\leq d}}\ \underset{\boldsymbol{\alpha}\in \mathcal{U}}\sup \left| \frac{\partial f_{j}}{\partial \alpha_{i}}(\boldsymbol{\alpha}) \right| < \infty
\end{equation}
and
\begin{equation}\label{C}
C:= \max_{\substack{ 1\leq j\le m\\ 1\leq i,k\leq d}}\ \underset{\boldsymbol{\alpha}\in \mathcal{U}}\sup \left| \frac{\partial^2 f_{j}}{\partial \alpha_{i}\partial \alpha_{k}}(\boldsymbol{\alpha})\right| < \infty\,.
\end{equation}

\noindent The following result can be viewed as an analogue of Dirichlet's theorem for manifolds $\cM \subset \R^n$.  In short, the points of interest are restricted to  $\cM$ and given an approximating function $\psi$,  the rational points $\vv p/q  \in \Q^n$ lie within some  $\psi$-neighborhood of $\cM$.

\bigskip

\begin{thm}\label{Dir}
Let  $\mathcal{M}$ be as above and let $\psi:\N\to(0,1]$ be any monotonic function satisfying
\eqref{v4}
and
\begin{equation}\label{v4++}
\inf_{Q\in\N}Q\psi(Q)>0\,.
\end{equation}
Then for any  $\boldsymbol{\alpha}=(\alpha_{1},\dots,\alpha_{d})\in \mathcal{U}$ there is an infinite subset $\cQ\subset\N$ such that for any $Q\in\cQ$ there exists $(p_{1},\dots, p_{n},q) \in \mathbb{Z}^{n+1}$  with $1\leq q \leq Q$ and $(p_{1}/q,\dots,p_{d}/q) \in \mathcal{U}$ such that
\begin{equation}\label{v4.4}
\left| \alpha_{i} -\frac{p_{i}}{q}\right| < \frac{2^{m/d}}{q(Q\psi(Q)^m)^{1/d}}\qquad\text{for $1\leq i \leq d$}
\end{equation}
and
\begin{equation}\label{v4.5}
\left| f_{j}\left(\frac{p_{1}}{q},\dots,\frac{p_{d}}{q}\right) -\frac{p_{d+j}}{q}\right| < \frac{\psi(q)}{q}\qquad\text{for $1\leq j \leq m$.}
\end{equation}
\end{thm}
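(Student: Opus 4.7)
The plan is to apply Minkowski's theorem for systems of linear forms to a carefully chosen unimodular system in $(p_1,\dots,p_n,q)\in\Z^{n+1}$ and then use a second-order Taylor expansion of $\vv f$ at $\bm\alpha$ to convert the resulting linear bounds into the desired estimates \eqref{v4.4}--\eqref{v4.5}. The key trick is to absorb the Jacobian of $\vv f$ into the linear forms themselves, so that the linear part of the Taylor expansion cancels and only a harmless quadratic remainder survives.

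Concretely, for a fixed $\bm\alpha\in\cU$ and a target $Q\in\N$ I would consider the $n+1$ linear forms in $(p_1,\dots,p_n,q)$ given by
\begin{align*}
L_i &= q\alpha_i - p_i \qquad (1\le i\le d),\\
L_{d+j} &= qf_j(\bm\alpha) - p_{d+j} - \sum_{i=1}^{d} \frac{\partial f_j}{\partial\alpha_i}(\bm\alpha)\,(q\alpha_i - p_i) \qquad (1\le j\le m),\\
L_{n+1} &= q.
\end{align*}
The coefficient matrix is block triangular with diagonal blocks $-I_d$, $-I_m$ and $1$, so its determinant equals $(-1)^n$ and the system is unimodular. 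Setting $R:=2^{m/d}(Q\psi(Q)^m)^{-1/d}$ and $S:=\psi(Q)/2$, the product of the target bounds satisfies $R^d S^m Q = 2^m\psi(Q)^{-m}Q^{-1}\cdot 2^{-m}\psi(Q)^m\cdot Q = 1$, matching the determinant; the standard open-body version of Minkowski's theorem (applied after inflating $Q$ to $Q+\tfrac12$ to force the product strictly above one) therefore supplies a nonzero integer vector with $|L_i|<R$, $|L_{d+j}|<S$ and $|q|\le Q$. The estimates $R,S<1$, automatic once $Q\psi(Q)^m$ is large, rule out $q=0$; replacing $(q,\vv p)$ by $(-q,-\vv p)$ if necessary, I may take $1\le q\le Q$. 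The bound $|L_i|<R$ is exactly \eqref{v4.4}, and since $R/q$ is smaller than the distance from $\bm\alpha$ to $\partial\cU$ once $Q$ is large enough, $\vv p/q$ lies in $\cU$.

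For \eqref{v4.5}, Taylor's theorem at $\bm\alpha$ to second order gives, after multiplying by $q$,
\begin{equation*}
qf_j\!\left(\tfrac{p_1}{q},\dots,\tfrac{p_d}{q}\right) - p_{d+j} = L_{d+j} + \frac{1}{2q}\sum_{i,k=1}^{d} \frac{\partial^2 f_j}{\partial\alpha_i\partial\alpha_k}(\xi)\,(p_i-q\alpha_i)(p_k-q\alpha_k)
\end{equation*}
for some $\xi$ on the segment joining $\bm\alpha$ and $\vv p/q$, the first-order piece having been absorbed into $L_{d+j}$ by construction. The right-hand side is bounded in absolute value by $|L_{d+j}|+d^2CR^2/(2q)<\psi(Q)/2+d^2CR^2/(2q)$, and the monotonicity of $\psi$ together with $q\le Q$ yields $\psi(Q)/2\le\psi(q)/2$. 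It thus suffices to show $d^2CR^2\le q\psi(q)$; hypothesis \eqref{v4++} gives $q\psi(q)\ge c_0>0$, reducing matters to $R^2\le c_0/(d^2C)$, and hypothesis \eqref{v4} delivers an infinite set $\cQ\subset\N$ of $Q$ along which $Q\psi(Q)^m\to\infty$ and hence $R\to0$. For all sufficiently large $Q\in\cQ$ every requirement (including $\vv p/q\in\cU$) is simultaneously met, proving the theorem. The main technical point I anticipate is precisely this interplay: neither \eqref{v4} nor \eqref{v4++} alone can tame the Taylor remainder, but \eqref{v4} forces $R$ to be arbitrarily small along $\cQ$ while \eqref{v4++} keeps $q\psi(q)$ bounded below, and it is the multiplicative combination of these two effects that closes the estimate.
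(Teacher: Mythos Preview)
Your proposal is correct and follows essentially the same approach as the paper. Your linear form $L_{d+j}=qf_j(\bm\alpha)-p_{d+j}-\sum_i\frac{\partial f_j}{\partial\alpha_i}(\bm\alpha)(q\alpha_i-p_i)$ is exactly the paper's form $qg_j(\bm\alpha)+\sum_i p_i\frac{\partial f_j}{\partial\alpha_i}(\bm\alpha)-p_{d+j}$ once one expands $g_j=f_j-\sum_i\alpha_i\partial f_j/\partial\alpha_i$, your bounds $R$ and $S$ coincide with the paper's, and the Taylor-remainder estimate and the respective roles of \eqref{v4} and \eqref{v4++} are identical.
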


\bigskip

\begin{proof}
Define the following functions of $\boldsymbol{\alpha} \in \mathcal{U}$
\begin{equation}
g_{j}:= f_{j}- \sum_{i=1}^{d} \alpha_{i}\frac{\partial f_{j}}{\partial \alpha_{i}} \qquad (1\leq j \leq m)
\end{equation}
and consider the system of inequalities
\begin{empheq}[left=\empheqlbrace]{align}
    &  \Big| qg_{j}(\boldsymbol{\alpha}) +\sum_{i=1}^{d} p_{i}\frac{\partial f_{j}}{\partial\alpha_{i}}(\boldsymbol{\alpha}) -p_{d+j}  \Big| <\tfrac12\psi(Q)\qquad(1\leq j \leq m)  \label{Mink1}\\[1ex]
    &  \left| q\alpha_{i} - p_{i}\right| < (2^{-m}Q\psi(Q)^m)^{-1/d}\qquad\qquad\qquad\ \ (1\leq i \leq d)\label{Mink2}\\[2ex]
    &  |q| \leq Q\,. \label{Mink3}
  \end{empheq}
In the left hand side of this system of inequalities we have $(m+d+1)$ linear forms that correspond to the $(m+d+1)$ rows of the matrix
\begin{equation*}
G = G(\boldsymbol{\alpha}) :=\begin{pmatrix}
  g_{1} & \frac{\partial f_{1}}{\partial \alpha_{1}} & \cdots & \frac{\partial f_{1}}{\partial \alpha_{d}} & -1 & 0 & \cdots & 0 \\

  \vdots & \vdots & \ddots & \vdots & \vdots & & \ddots & \vdots \\

  g_{m}  & \frac{\partial f_{m}}{\partial \alpha_{1}}  & \cdots & \frac{\partial f_{m}}{\partial \alpha_{d}} & 0 & 0 & \cdots & -1  \\

  \alpha_{1} & -1 & \cdots & 0 & 0 & 0 & \cdots & 0   \\

  \vdots & \vdots & \ddots & \vdots & \vdots & \vdots & & \vdots \\

  \alpha_{d} & 0 & \cdots & -1 & 0 & 0 & \cdots & 0 \\

  1 & 0 & \cdots & 0 & 0 & 0 & \cdots & 0
 \end{pmatrix}.
\end{equation*}
The determinant of $G$ is easily seen to be $\pm1$, while the product of the quantities on right hand side of the system of inequalities \eqref{Mink1} -- \eqref{Mink3} is $1$. Therefore, by Minkowski's theorem for systems of linear forms,
there exists a non-zero solution $(p_{1},\dots, p_{n},q) \in \mathbb{Z}^{n+1}$ to the  system of inequalities \eqref{Mink1}--\eqref{Mink3}. Without loss of generality,  we will assume that $q\ge0$.  In other words,   inequality \eqref{Mink3} reads
\begin{equation}
\label{Mink3+}
0 \leq q \le Q \, .
\end{equation}
From now on we fix $\bm\alpha\in\cU$ and let
$$B:=\inf_{Q\in\N}Q\psi(Q)   \stackrel{\eqref{v4++}}{>}    0 \, . $$  Furthermore, since $\cU$ is open there exists a ball $B(\bm\alpha,r)$ centered at $\bm\alpha$ of radius $r$ which is contained in $\cU$. Define
\begin{equation}\label{cQ}
\cQ:=\Big\{Q\in\N:(2^{-m}Q\psi(Q)^m)^{-1/d}< \min\big\{1,r,\left(\frac{B}{Cd^2}\right)^{\frac12}\big\}\Big\}\,,
\end{equation}
where $C$ is given by \eqref{C}.
Then, by \eqref{v4}, $\cQ$ is an infinite subset of $\N$.

The upshot of the above is that the proof of  Theorem~\ref{Dir} is reduced to  showing  that for any $Q\in\cQ$ the solution $(p_{1},\dots, p_{n},q)$ to the system of inequalities  \eqref{Mink1}, \eqref{Mink2} and \eqref{Mink3+} arising from Minkowski's theorem is in fact a solution to the system of inequalities associated with  Theorem~\ref{Dir}.  With this in mind, we first show that we can  take $ q \ge 1 $ in  \eqref{Mink3+}.   Indeed, suppose $q$ vanishes. Then, by \eqref{cQ}, we have that $(2^{-m}Q\psi(Q)^m)^{-d}< 1$. Hence, inequalities (\ref{Mink2}) imply that $\left| p_{i} \right|< 1$ for each $1\leq i \leq d$, and as $p_{i} \in \mathbb{Z}$ we must therefore have $p_{i}=0$ for $1\le i\le d$. This together with   (\ref{Mink1}) implies that $\left| p_{d+j}\right| <\psi(Q)<1$ and so, by the same reasoning, $\left| p_{d+j}\right| =0$ for $1\leq j \leq m$. Hence $(p_{1},\dots, p_{n},q)= \textbf{0}$, contradicting the fact that the solution guaranteed by Minkowski's theorem is non-zero. Thus, $q\ge1$ as claimed. In turn, on dividing (\ref{Mink2}) by $q$, we obtain \eqref{v4.4} and  by the definition of $\cQ$  the associated  rational point $\vv p/q:= (p_{1}/q,\dots,p_{d}/q)\in B(\bm\alpha,r)\subset\cU$.

It remains to show that \eqref{v4.5} is satisfied.  By Taylor's formula, for each $1 \le j \le m$, we have that
\begin{equation}
f_{j}\left(\frac{\textbf{p}}{q} \right) = f_{j}(\boldsymbol{\alpha}) + \sum_{n=1}^{d} \alpha_{i}\frac{\partial f_{j}}{\partial \alpha_{i}}(\boldsymbol{\alpha})\left(\frac{p_{i}}{q}-\alpha_{i}\right) + R_{j}(\boldsymbol{\alpha},\tilde{\boldsymbol{\alpha}})
\end{equation}
for some $\tilde{\boldsymbol{\alpha}}$ (depending on $\bm\alpha$ and $\vv p/q$) which lies on the line joining $\boldsymbol{\alpha}$ and $\textbf{p}/q$, where
\begin{equation}\label{v4.11}
R_{j}(\boldsymbol{\alpha},\tilde{\bm\alpha}):= \frac{1}{2}\sum_{i=1}^{d} \sum_{k=1}^{d}\frac{\partial^2 f_{j}}{\partial\alpha_{i} \partial\alpha_{k}}(\tilde{\bm\alpha})\left(\frac{p_{i}}{q}-\alpha_{i}\right)\left(\frac{p_{k}}{q}-\alpha_{k}\right).
\end{equation}
In particular, since $\vv p/q\in B(\bm\alpha,r)$ and a ball is a convex set,  $\tilde{\boldsymbol{\alpha}}\in B(\bm\alpha,r)\subset\mathcal{U}$ and so $R_{j}(\boldsymbol{\alpha},\tilde{\boldsymbol{\alpha}})$ is well defined.
Using \eqref{v4.11}, we now rewrite the left hand side of (\ref{Mink1}) in the following way:
\begin{align}
\Big| qg_{j}(\boldsymbol{\alpha}) & + \sum_{i=1}^{d} p_{i}\frac{\partial f_{j}}{\partial\alpha_{i}}(\boldsymbol{\alpha}) -p_{d+j}  \Big|\nonumber\\[1ex]
       =& \ \left|q\left(f_{j}(\boldsymbol{\alpha})- \sum_{i=1}^{d} \alpha_{i}\frac{\partial f_{j}}{\partial \alpha_{i}}(\boldsymbol{\alpha})\right) +\sum_{i=1}^{d} p_{i}\frac{\partial f_{j}}{\partial\alpha_{i}}(\boldsymbol{\alpha}) -p_{d+j}  \right|\nonumber\\[1ex]
       = &   \  \Big|qf_{j}(\boldsymbol{\alpha})+ \sum_{i=1}^{d} (p_{i}-q\alpha_{i})\frac{\partial f_{j}}{\partial\alpha_{i}}(\boldsymbol{\alpha}) -p_{d+j}  \Big|\nonumber \\[2ex]
       =&    \  \left| q \left(f_{j}\left(\frac{\textbf{p}}{q} \right)- \sum_{n=1}^{d} \alpha_{i}\frac{\partial f_{j}}{\partial \alpha_{i}}(\boldsymbol{\alpha})\left(\frac{p_{i}}{q}-\alpha_{i}\right) - R_{j}(\boldsymbol{\alpha},\tilde{\boldsymbol{\alpha}})\right) \right. \nonumber\\[2ex]
        &\qquad\qquad\qquad\qquad +   \left. \sum_{i=1}^{d} (p_{i}-q\alpha_{i})\frac{\partial f_{j}}{\partial\alpha_{i}}(\boldsymbol{\alpha}) -p_{d+j}\right| \nonumber \\[1ex]
       =&   \  \left|q f_{j}\left(\frac{\textbf{p}}{q} \right)-p_{d+j} - qR_{j}(\boldsymbol{\alpha},\tilde{\boldsymbol{\alpha}})\right|.\label{vb37}
\end{align}

\noindent  Suppose for the moment that
\begin{equation}\label{Boundsv}
\left|qR_{j}(\boldsymbol{\alpha},\tilde{\boldsymbol{\alpha}})\right| < \frac{\psi(q)}{2}\,.
\end{equation}
Then together with \eqref{Mink1} and  \eqref{vb37}, we obtain that
\begin{align}
\left|qf_{j}\left(\frac{\textbf{p}}{q}\right) -p_{d+j} \right| & \ \leq  \ \left|q f_{j}\left(\frac{\textbf{p}}{q} \right)-p_{d+j} - qR_{j}(\boldsymbol{\alpha},\tilde{\boldsymbol{\alpha}})\right| + \left|qR_{j}(\boldsymbol{\alpha},\tilde{\boldsymbol{\alpha}})\right|\nonumber\\[2ex]
& \  < \
\frac{\psi(Q)}{2} +\frac{\psi(q)}{2},\label{Bound4}
\end{align}
and so, by the monotonicity of $\psi$, we conclude that
\begin{equation*}
\left|qf_{j}\left(\frac{\textbf{p}}{q}\right) -p_{d+j} \right| < \psi(q)\,.
\end{equation*}
This implies \eqref{v4.5} and thereby completes the proof of Theorem~\ref{Dir} modulo the truth of \eqref{Boundsv}.

In order to establish  \eqref{Boundsv},   we   use \eqref{C} and \eqref{v4.4} to  obtain that
\begin{align*}
 \nonumber\left|qR_{j}(\boldsymbol{\alpha},\tilde{\boldsymbol{\alpha}})\right| & \ =  \
 \left|\frac{q}{2}\sum_{i=1}^{d} \sum_{k=1}^{d}\frac{\partial^2 f_{j}}{\partial\alpha_{i} \partial\alpha_{k}}(\tilde{\boldsymbol{\alpha}})\left(\frac{p_{i}}{q}-\alpha_{i}\right)\left(\frac{p_{k}}{q}-\alpha_{k}\right)\right| \\[2ex]
  & \ <  \ \frac{Cqd^2}{2}\left(\frac{2^{\frac{m}{d}}}{q(Q\psi(Q)^m)^{\frac{1}{d}}}\right)^2. \label{Bound2}
\end{align*}
Thus \eqref{Boundsv} follows if
\begin{equation}\label{Bound3+}
\frac{Cqd^2}{2}\left(\frac{2^{\frac{m}{d}}}{q(Q\psi(Q)^m)^{\frac{1}{d}}}\right)^2 < \frac{\psi(q)}{2}\,.
\end{equation}
The latter is true if and only if
\begin{equation}\label{Bound3}
2^{\frac{2m}{d}}Cd^2 < q\psi(q)(Q\psi(Q)^m)^{\frac{2}{d}}
\end{equation}
and, in view of the definition of $B$, is implied by the inequality
$$
2^{\frac{2m}{d}}Cd^2 < B(Q\psi(Q)^m)^{\frac{2}{d}}.
$$
The latter however is true for any $Q\in\cQ$ -- see \eqref{cQ}. Therefore \eqref{Bound3} and consequently \eqref{Bound3+} are satisfied.  This establishes \eqref{Boundsv} as desired.
\end{proof}

\bigskip

The following  `infinitely often'  consequence  of Dirichlet's theorem for manifolds  will be the key to establishing Theorem~\ref{t1}.

\begin{cor}\label{Cor2}
Let  $\mathcal{M}$ and  $\psi$ be as in Theorem~\ref{Dir}.   Furthermore, let $\tau\le\frac1m$ and suppose there exists a constant $\kappa>0$  such that
\begin{equation}\label{v4*+}
\kappa\le Q^\tau\psi(Q)
\end{equation}
for infinitely many $Q\in\N$.
Then, for any $\boldsymbol{\alpha}\in \cU\setminus\mathbb{Q}^d$ there exist infinitely many different vectors $(p_{1},\dots, p_{n},q) \in \mathbb{Z}^{n}\times\N$ with $(p_{1}/q,\dots,p_{n}/q) \in \mathcal{U}$ satisfying the system of inequalities \eqref{v4.5} and
\begin{equation}\label{v4.21}
\left| \alpha_{i} -\frac{p_{i}}{q}\right| < \left(\frac2\kappa\right)^{\frac md}q^{-1-\frac{1-\tau m}{d}}\qquad\text{for \  $1\leq i \leq d$. }
\end{equation}
\end{cor}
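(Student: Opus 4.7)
The plan is to apply Theorem~\ref{Dir} directly and then exploit the additional hypothesis \eqref{v4*+} to upgrade the $Q$-dependent bound \eqref{v4.4} into the purely $q$-dependent bound \eqref{v4.21}. Theorem~\ref{Dir} is applicable because \eqref{v4} and \eqref{v4++} are part of the corollary's hypotheses, so for the chosen $\bm\alpha \in \cU \setminus \Q^d$ it supplies an infinite set $\cQ \subset \N$ together with, for each $Q \in \cQ$, an integer vector $(p_1,\dots,p_n,q)$ with $1 \le q \le Q$, $\vv p/q \in \cU$, that satisfies \eqref{v4.4} and \eqref{v4.5}. Note that \eqref{v4.5} is exactly one half of what we want.

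Next I would introduce $\cR := \{Q\in\N : \kappa \le Q^\tau\psi(Q)\}$, which is infinite by \eqref{v4*+}. For $Q \in \cR$ we have $Q\psi(Q)^m \ge \kappa^m Q^{1-\tau m}$, and combining this with \eqref{v4.4}, together with $q\le Q$ and the non-negativity of $1-\tau m$ (which needs $\tau \le 1/m$), gives
\begin{equation*}
\left|\alpha_i - \frac{p_i}{q}\right| < \frac{2^{m/d}}{q(Q\psi(Q)^m)^{1/d}} \le \frac{2^{m/d}}{\kappa^{m/d}\, q\, Q^{(1-\tau m)/d}} \le \left(\frac{2}{\kappa}\right)^{m/d} q^{-1-(1-\tau m)/d},
\end{equation*}
which is \eqref{v4.21}. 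Thus for any $Q \in \cR \cap \cQ$ we obtain a vector $(p_1,\dots,p_n,q)$ satisfying both \eqref{v4.5} and \eqref{v4.21}.

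It remains to verify that $\cR \cap \cQ$ is infinite and that the extracted vectors are pairwise distinct for infinitely many choices. Membership in $\cQ$ amounts, per \eqref{cQ}, to $Q\psi(Q)^m$ exceeding a constant depending only on $\cM$, $\bm\alpha$ and $\psi$. When $\tau < 1/m$ we have $1 - \tau m > 0$, hence $Q\psi(Q)^m \to \infty$ along $\cR$ and all sufficiently large $Q \in \cR$ sit in $\cQ$. In the boundary case $\tau = 1/m$, \eqref{v4*+} by itself only yields $Q\psi(Q)^m \ge \kappa^m$; here one falls back on the primary hypothesis \eqref{v4}, which provides a subsequence along which $Q^{1/m}\psi(Q)\to\infty$, and this subsequence lies simultaneously in $\cR$ and in $\cQ$.

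Finally, for the count of distinct solutions, I would argue by contradiction using $\bm\alpha \notin \Q^d$: fix a coordinate $\alpha_{i_0} \notin \Q$, so that $|\alpha_{i_0} - p_{i_0}/q| > 0$ for any rational $p_{i_0}/q$. If only finitely many distinct $(p_1,\dots,p_n,q)$ occurred, this quantity would be bounded below by some $\delta > 0$ over that finite collection, contradicting the fact that along the chosen subsequence $Q\psi(Q)^m \to \infty$ and hence the right-hand side of \eqref{v4.4} tends to zero. The main (and essentially only) obstacle is therefore the boundary case $\tau = 1/m$, which is handled by invoking \eqref{v4} to force $Q\psi(Q)^m$ to blow up along a subsequence; elsewhere the proof is a straightforward rewriting of the bound supplied by Theorem~\ref{Dir}.
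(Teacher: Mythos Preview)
Your proposal is correct and follows essentially the same route as the paper: apply Theorem~\ref{Dir}, intersect the resulting set $\cQ$ with the set where \eqref{v4*+} holds, use $\psi(Q)\ge\kappa Q^{-\tau}$ and $q\le Q$ to convert \eqref{v4.4} into \eqref{v4.21}, and handle the boundary case $\tau=1/m$ via \eqref{v4}. The only cosmetic difference is that the paper argues there are infinitely many distinct \emph{values of $q$} (a slightly stronger claim), whereas you argue directly for infinitely many distinct vectors; both contradictions rest on the same observation that the right-hand side of \eqref{v4.4} tends to zero along the chosen subsequence while $\bm\alpha\notin\Q^d$ forces a positive lower bound over any finite collection.
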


\begin{proof}
Fix any $\bm\alpha\in\cU$ and let $\cQ$ be given by \eqref{cQ}. Then, as was shown in the proof of Theorem~\ref{Dir}, for any $Q\in\cQ$ there exists $(p_1,\dots,p_n,q)\in\Z^n\times\N$ with $q\le Q$,
$(p_{1}/q,\dots,p_{n}/q) \in \mathcal{U}$ and such that \eqref{v4.4} and \eqref{v4.5} hold. Let $S$ be the set of $Q\in\N$ satisfying \eqref{v4*+}. By the conditions of the corollary, $S$ is infinite. Clearly $Q\psi(Q)^m\ge Q^{1-m\tau}\kappa$ for $Q\in S$. Hence, $Q\psi(Q)^m$ is unbounded on $S$ if $\tau<1/m$. By \eqref{v4}, $Q\psi(Q)^m$ is unbounded on $S$ in the case $\tau=1/m$ as well. Choose, as we may, any strictly increasing sequence $(Q_i)_{i\in\N}$ of numbers from $S$ such that $\lim_{i\to\infty}Q_i\psi(Q_i)^m$. By the definition \eqref{cQ} of $\cQ$, there is a sufficiently large $i_0$ such that $Q_i\in\cQ$ for all $i\ge i_0$. Define $\cQ^*=\{Q_i:i\ge i_0\}$. Then, by construction,
\eqref{v4*+} holds for any $Q\in\cQ^*$ and
\begin{equation}\label{donald}
\sup_{Q\in\cQ^*}Q\psi(Q)^m=\infty  \, .
\end{equation}
By \eqref{v4.4} and \eqref{v4*+}, we have that for any $Q\in\cQ^*$  the associated  solution $(p_1,\dots,p_n,q)\in \Z^n\times\N $ satisfies
\begin{align*}
\left| \alpha_{i} -\frac{p_{i}}{q}\right| & \ < \ \frac{2^{m/d}}{q(Q\psi(Q)^m)^{1/d}}
\le \frac{2^{m/d}}{q(Q\kappa^mQ^{-\tau m})^{1/d}}\\[3ex]
& \  =   \ \left(\frac2\kappa\right)^{\frac{m}{d}}\frac{1}{q\,Q^{\frac{1-\tau m}{d}}}
~\le~ \left(\frac2\kappa\right)^{\frac{m}{d}}\frac{1}{q\cdot q^{\frac{1-\tau m}{d}}} \qquad\text{for \  $1\leq i \leq d$. }
\end{align*}
This  is exactly \eqref{v4.21} as desired. To complete the proof it suffices to show that there are infinitely many different $q$'s among the solutions $(p_1,\dots,p_n,q)$ as $Q$ runs through $\cQ^*$.   With this in mind, suppose on the contrary that there are  only finitely many such $q$'s and  let $A$ be the corresponding set.  As $\boldsymbol{\alpha}\in \cU\setminus\mathbb{Q}^d$, there exists some $1\leq i\leq d$ for which  $\alpha_{i} \not\in \mathbb{Q}$. Hence, there exists some $\delta_0>0$ such that
\begin{equation*}
\delta_0\le \min_{\substack{q\in A\\ p_i\in\Z}}\left|q\alpha_{i}-p_{i}\right|   \, .
\end{equation*}
Together with  \eqref{v4.4}, it follows that
\begin{equation*}
\delta_0\le\left|q\alpha_{i}-p_{i}\right|\leq \frac{2^{\frac{m}{d}}}{(Q\psi(Q)^m)^{\frac{1}{d}}}\,.
\end{equation*}
This contradicts   \eqref{donald} and thereby completes the proof of the corollary.
\end{proof}

\section{Proof of the main theorem}

 Let $\psi$ be as in Theorem~\ref{t1} and let $\kappa$ denote the infimum defined by \eqref{v4m}. We can  assume without loss of generality that $\psi: \N \to (0,1]$.  Indeed, if this was not the case we define  the auxiliary function $\tilde{\psi}: q  \to \tilde{\psi}(q):= \min\{1, \psi(q)\}$ and since   $\mathcal{S}_{n}(\tilde{\psi})  \subseteq \mathcal{S}_{n}(\psi)$,  it  suffices to prove the theorem with $\psi$ replaced by $ \tilde{\psi}$.

\bigskip

 The following lemma from \cite{note} will be required during the course  of the proof of the theorem.

\begin{lem}\label{lem1}
Let $\lbrace B_{i}\rbrace$ be a sequence of balls in $\mathbb{R}^k$ with $\vert B_{i}\vert\rightarrow 0$ as $i\rightarrow \infty$. Let $\lbrace U_{i}\rbrace$ be a sequence of Lebesgue measurable sets such that $U_{i}\subset B_{i}$ for all $i$. Assume that for some $c>0$, $\vert U_{i} \vert \geq c\vert B_{i}\vert$ for all $i$. Then the sets $\mathcal{C}:= \limsup_{i \to \infty} U_{i}$ and $\mathcal{B}:= \limsup_{i\to \infty} B_{i}$ have the same Lebesgue measure.
\end{lem}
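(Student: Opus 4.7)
The plan is to establish the lemma by showing $\vert\cB\setminus\cC\vert=0$; the reverse inequality $\vert\cC\vert\le\vert\cB\vert$ is immediate from $U_i\subseteq B_i$. I would argue by contradiction, using the Lebesgue density theorem to derive a lower bound on $\vert E_{n_0}\cap B_i\vert$ (for a suitably chosen subset $E_{n_0}$ of the exceptional set) that is incompatible with the hypothesis $\vert U_i\vert\ge c\vert B_i\vert$.

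First, I would decompose
\[
\cB\setminus\cC=\bigcup_{n\in\N}E_n,\qquad E_n:=\cB\cap\bigcap_{i\ge n}U_i^c,
\]
and assume for contradiction that $\vert\cB\setminus\cC\vert>0$, so that $\vert E_{n_0}\vert>0$ for some $n_0\in\N$ by countable additivity. Since $E_{n_0}$ is Lebesgue measurable, the Lebesgue density theorem produces a density point $x\in E_{n_0}$, that is
\[
\lim_{r\to 0}\frac{\vert E_{n_0}\cap B(x,r)\vert}{\vert B(x,r)\vert}=1.
\]

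Next, since $x\in\cB=\limsup_{i\to\infty}B_i$, there are infinitely many indices $i$ with $x\in B_i$, and along this subsequence the radii $r_i$ of $B_i$ tend to zero because $\vert B_i\vert\to0$. Fix $\varepsilon\in(0,c/2^k)$ in advance. The containment $x\in B_i$ forces $B_i\subseteq B(x,2r_i)$, and $\vert B(x,2r_i)\vert=2^k\vert B_i\vert$ for any norm on $\R^k$. Combining the density property at $x$ with this inclusion via an elementary inclusion--exclusion gives, for all sufficiently large such $i$,
\[
\vert E_{n_0}\cap B_i\vert\ \ge\ \vert E_{n_0}\cap B(x,2r_i)\vert-\bigl(\vert B(x,2r_i)\vert-\vert B_i\vert\bigr)\ \ge\ (1-2^k\varepsilon)\vert B_i\vert.
\]

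On the other hand, whenever $i\ge n_0$ the definition of $E_{n_0}$ forces $E_{n_0}\cap U_i=\emptyset$, so $E_{n_0}\cap B_i\subseteq B_i\setminus U_i$, and the hypothesis $\vert U_i\vert\ge c\vert B_i\vert$ yields the competing bound $\vert E_{n_0}\cap B_i\vert\le(1-c)\vert B_i\vert$. Comparing the two bounds forces $c\le 2^k\varepsilon$, contradicting our choice $\varepsilon<c/2^k$; hence $\vert\cB\setminus\cC\vert=0$, as required. The step needing most care, though not a serious obstacle, is the transfer of the density estimate at $x$ to a lower bound on $\vert E_{n_0}\cap B_i\vert$: the density theorem is naturally phrased in terms of balls centred at $x$, whereas the $B_i$ containing $x$ are typically centred elsewhere. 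Enlarging the reference ball to $B(x,2r_i)$ resolves this, at the cost of a purely dimensional factor $2^k$, which is easily absorbed by choosing $\varepsilon$ small enough relative to $c$.
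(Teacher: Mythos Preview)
Your argument is correct. The paper does not actually supply its own proof of this lemma; it merely quotes the statement and cites \cite{note} (Beresnevich--Velani, \emph{Acta Arith.} 2008). The density argument you give is the standard one and is essentially the same as in that reference: pass to a density point of the exceptional set, use that infinitely many $B_i$ contain it with shrinking radii, and compare the resulting near-full measure of $E_{n_0}$ in $B_i$ with the upper bound $(1-c)\vert B_i\vert$ forced by $E_{n_0}\cap U_i=\emptyset$.

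Two cosmetic remarks. First, the phrase ``by countable additivity'' is slightly off, since the $E_n$ are nested rather than disjoint; ``by continuity of measure from below'' (or simply countable subadditivity) is what you mean. Second, the Lebesgue density theorem is usually stated for Euclidean balls, whereas the paper allows balls with respect to an arbitrary norm; since all norms on $\R^k$ are equivalent, the density theorem transfers immediately, and your $2^k$ comparison constant would in general be replaced by another dimensional constant depending on the norm, which is harmless for the argument.
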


\bigskip

Without loss of generality,  we  assume that $\cM$ is given \eqref{monge}.
Now, given $ \vv f $  and $\psi$ let
 \begin{equation}\label{pq}
B(\vv f, \psi) :=  \left\{ (\vv p,q) \in \mathbb{Z}^{n}\times\N : (p_{1}/q,\dots,p_{d}/q) \in \mathcal{U} \text{\  and \eqref{v4.5} holds.} \right\}
\end{equation}
  In view of   Corollary~\ref{Cor2},  we have that
for almost every $\bm\alpha\in\cU$ (in fact all irrational $\bm\alpha\in\cU$)
there are infinitely many different vectors  $(\vv p,q) \in  B(\vv f, \psi)  $ satisfying  \eqref{v4.21}.
Then, for any fixed $\delta>0$, it follows via Lemma~\ref{lem1} that for almost every $\bm\alpha\in\cU$
there are infinitely many different vectors $(\vv p,q) \in  B(\vv f, \psi)  $ satisfying
\begin{equation}\label{v4.21x+}
\left| \alpha_{i} -\frac{p_{i}}{q}\right| < \delta q^{-1-\eta}\qquad\text{for $1\leq i \leq d$}\,,
\end{equation}
where $\eta$ is as in  \eqref{vb01}.

Next, for $(\vv p,q) \in  B(\vv f, \psi)  $ consider the ball
\begin{equation}\label{bpq+}
B_{\textbf{p},q}= \left\lbrace\vv x=(x_{1},\dots,x_{d})\in\mathbb{R}^d: \max_{1\le i\le d}\left|x_{i}-\frac{p_{i}}{q}\right|<\delta^{d/s}q^{-\tau-1}\right\rbrace,
\end{equation}
where $s$ is given by \eqref{v5m}.
The previous statement concerning \eqref{v4.21x+} implies that for any ball  $B$ in $\cU\subset\R^d$ we have that
$$
 \cH^d \big( \ B\cap \limsup_{(\vv p,q)\in B(\vv f,\psi)} \!\! B^s_{\vv p,q} \ \big)=\cH^d(B)  \, ,
$$
where the ball  $B^s_{\vv p,q}$   associated with $B_{\vv p,q}$ is defined via \eqref{e:006} with $k=d$.
Let
$$
W:=\limsup_{(\vv p,q)\in B(\vv f,\psi)} \!\! B_{\vv p,q}\,.
$$
Then, by the Mass Transference Principle, for any ball $B$ in $\cU$ it trivially follows that
\begin{equation}\label{W}
\cH^s \big( \ B\cap W \ \big)=\cH^s(B)\,.
\end{equation}

\noindent Now let $\bm\alpha\in B_{\vv p,q}$ for some $(\vv p,q)\in B(\vv f,\psi)$. Then on using the triangle inequality and the Mean Value Theorem, for each $1\le j\le m$ and some $\textbf{c}\in \mathcal{U}$,  it follows  that
\begin{align}
\nonumber \left|f_{j}\left(\boldsymbol{\alpha}\right) -\frac{p_{d+j}}{q} \right| & \leq \left|f_{j}\left(\boldsymbol{\alpha}\right) - f_{j}\left(\frac{\textbf{p}}{q}\right)\right| + \left|f_{j}\left(\frac{\textbf{p}}{q}\right) -\frac{p_{d+j}}{q} \right|\\[2ex]
\nonumber &=\left|\nabla f_{j}(\textbf{c})\cdot\left(\boldsymbol{\alpha}-\frac{\textbf{p}}{q}\right)\right|+
\left|f_{j}\left(\frac{\textbf{p}}{q}\right) -\frac{p_{d+j}}{q} \right|\\[2ex]
\nonumber &~\hspace*{-4ex} \stackrel{\eqref{D}\&\eqref{v4.5}}{\le} dD\max_{1\le i\le d}\left|\alpha_i-\frac{p_i}q\right|+\frac{\psi(q)}{q}\\[2ex]
\nonumber&\stackrel{\eqref{bpq+}}{\le} \delta^{d/s} dD q^{-1-\tau}+\frac{\psi(q)}{q}\\[2ex]
\nonumber&\le \delta^{d/s} dD\kappa^{-1} \,\frac{\psi(q)}{q}+\frac{\psi(q)}{q}\le\frac{2\psi(q)}{q}
\end{align}
provided $\delta > 0$ is sufficiently small.    In view of \eqref{v4m},  we can also  ensure that $\delta$ is  small enough  so that
$$
B_{\textbf{p},q}\subset \left\lbrace\vv x=(x_{1},\dots,x_{d})\in\mathbb{R}^d: \max_{1\le i\le d}\left|x_{i}-\frac{p_{i}}{q}\right|<\frac{\psi(q)}{q}\right\rbrace\,.
$$
Hence, in view of the  definition of $W$, it follow that
$$
W\subset\pi_d(\cS_n(2\psi)\cap\cM)\,.
$$
The projection map $\pi_d$ is bi-Lipschitz and thus it follows from  \eqref{W} that
\begin{equation}\label{vb770m++}
  \cH^s(\mathcal{S}_{n}(2\psi)\cap\mathcal{M})=\cH^s(\cM)   \,.
\end{equation}
Using \eqref{vb770m++} with $\frac12\psi$ (instead of $\psi$)  implies the measure part \eqref{vb770m} of the theorem.
The dimension part \eqref{v5m} of the theorem follows directly from  \eqref{vb770m} and the definition of Hausdorff dimension. This completes the proof of Theorem \ref{t1}.

\section{Concluding comments}

The new approach developed in this paper is simple and easy to apply.  It is based on first establishing an appropriate  Dirichlet-type result (assuming it does not already exist) and then  applying the Mass Transference Principle.  Within the context of Diophantine approximation on manifolds,  our approach enables us to establish lower bound dimension results for  any $C^2$ submanifold of $\R^n$.  This is  well beyond the class of non-degenerate manifolds for which general results are perceived to hold.   However, there is  a cost. The new  approach  does not allow us to  obtain the stronger  divergent Jarn\'ik-type results for  $ \cH^s(\cS_n(\psi)\cap\cM) $.  For this we require the significantly more sophisticated `ubiquity' approach developed in \cite{BDV}.

 \medskip

 We end the paper with several natural problems that explore the scope  of the new approach.

\medskip

 \noindent {\em Problem 1. }   Within the context of Corollary  \ref{cor1}, the new approach as implemented  requires the existence of the order $\tau_\psi$ at infinity of $1/\psi$.   It is not immediately clear whether or not it is possible to get away with  the weaker notion of the lower order at infinity of $1/\psi$ as in the modern version of the classical Jarn\'ik-Besicovitch theorem -- see \eqref{JBdani}.

   \noindent {\em Problem 2. }  Within the context of Theorem \ref{t0}, it would be interesting to know if the range of $\tau$  given by \eqref{tau} can be  extended to a larger range by imposing additional `mild' constraints on the $C^2$ submanifolds of $\R^n$.

     \noindent  {\em Problem 3. }  We suspect  that the  new approach can  successfully  be  applied to problems within the  setup of weighted approximations on manifolds.   More precisely,  it should be possible to give a simpler proof of the lower bound statement appearing in  Theorem~4  of \cite{BV07} and, at the same time, broaden the class of planar curves under consideration.

     \noindent {\em Problem 4. } It would be highly desirable to be able to  apply the new approach to inhomogeneous  problems. The goal would be to obtain inhomogeneous Jarn\'ik-Besicovitch type results for manifolds.  This seems to be a much harder task than the previous `weighted' problem.  In the first instance it might be useful to consider the case of planar curves and see if the new  approach can be utilised to give a simpler proof of the lower bound statement appearing in \cite[Corollary~1]{BVV11}.

   \noindent  {\em Problem 5. } In this paper we have completely restricted our attention to simultaneous approximation.   In \cite{DD00},  Dickinson $\&$ Dodson consider the problem of establishing a Jarn\'ik-Besicovitch type result for  dual approximation on manifolds.  In short, they prove a lower bound statement \cite[Theorem 2]{DD00} that is valid  for any extremal manifold.  We suspect that the  new approach can  be applied to problems within the dual setup.  Indeed, it may be possible to show that  Theorem~2  of \cite{DD00} is in fact valid well beyond the class of  extremal manifolds, possibly to all $C^2$ submanifolds of $\R^n$.

\vspace{4ex}

\noindent\textbf{Acknowledgements.}   The authors are grateful to the anonymous reviewer of the paper for their valuable suggestions. SV would like to thank Sharifa Ansari  for relieving him of excruciating  pain.  Her attitude and generosity  in a world full of immoral and selfish money grabbers has been most refreshing and humbling. Her actions have been a timely reminder  that there is always hope for humanity  --  even after the  political disasters of 2016!

\

\

{\footnotesize

\begin{minipage}{0.9\textwidth}
\footnotesize V. Beresnevich\\
University of York, Heslington, York, YO10 5DD, England\\
{\it E-mail address}\,:~~ \verb|victor.beresnevich@york.ac.uk|\\
\end{minipage}

\begin{minipage}{0.9\textwidth}
\footnotesize L. Lee\\
University of York, Heslington, York, YO10 5DD, England\\
{\it E-mail address}\,:~~ \verb|ldl503@york.ac.uk|\\
\end{minipage}

\begin{minipage}{0.9\textwidth}
\footnotesize R. C. Vaughan\\
Department of Mathematics, McAllister Building, Pennsylvania State University, University
Park, PA 16802-6401, U.S.A.\\
{\it E-mail address}\,:~~ \verb|rcv4@psu.edu|\\
\end{minipage}

\begin{minipage}{0.9\textwidth}
\footnotesize S. Velani\\
University of York, Heslington, York, YO10 5DD, England\\
{\it E-mail address}\,:~~ \verb|sanju.velani@york.ac.uk|\\
\end{minipage}

}

\end{document}